\documentclass[a4paper, 11pt]{article}
\usepackage{graphicx}
\DeclareGraphicsRule{.1}{mps}{.1}{}
\DeclareGraphicsRule{.2}{mps}{.2}{}

\usepackage{amsmath, amsthm, amssymb, amsfonts}

\usepackage[comma]{natbib}

\usepackage[colorlinks=true,breaklinks=true,bookmarks=true,urlcolor=blue,
     citecolor=blue,linkcolor=blue,bookmarksopen=false,draft=false]{hyperref}

\usepackage{graphicx}

\oddsidemargin=0in
\evensidemargin=0in
\textwidth=6in
\headheight=0pt
\headsep=0pt
\topmargin=0in
\textheight=8.5in

\newtheorem{theorem}{Theorem}

\newtheorem{lemma}[theorem]{Lemma}

\newtheorem{definition}[theorem]{Definition}

\newtheorem{example}[theorem]{Example}

\newcommand{\dR}{{\mathbb{R}}}
\newcommand{\dN}{{\mathbb{N}}}
\newcommand{\ep}{{\varepsilon}}

\newcommand{\calS}{{\mathcal S}}
\newcommand{\calT}{{\mathcal T}}

\newcommand{\conv}{{\mathrm{conv}}}
\newcommand{\Conv}{{\mathrm{conv}}}

\usepackage{tikz}
\usetikzlibrary{arrows}
\usetikzlibrary{automata,topaths,patterns}

\usepackage{multicol}

\newcounter{figurecounter}
\setcounter{figurecounter}{1}

\begin{document}

\title{Approachability with Constraints%
\thanks{Munk, Solan, and Weinbaum acknowledge the support of the Israel Science Foundation, Grants \#323/13 and \#217/17. Fournier acknowledges the support through the ANR Labex IAST.}}

\author{Ga\"etan Fournier%
\thanks{Universit\'e d'Aix-Marseille, AMSE-GREQAM, 13205 Marseille, France. e-mail: gaetan.fournier@univ-amu.fr},
Eden Kuperwasser%
\thanks{The School of Mathematical Sciences, Tel Aviv
University, Tel Aviv 6997800, Israel. e-mail: kuperwasser@mail.tau.ac.il},
Orin Munk%
\thanks{The School of Mathematical Sciences, Tel Aviv
University, Tel Aviv 6997800, Israel. e-mail: orin25@gmail.com},
Eilon Solan%
\thanks{The School of Mathematical Sciences, Tel Aviv
University, Tel Aviv 6997800, Israel. e-mail: eilons@post.tau.ac.il}, and
Avishay Weinbaum%
\thanks{The School of Mathematical Sciences, Tel Aviv
University, Tel Aviv 6997800, Israel. e-mail: avishay.weinbaum@gmail.com}}

\maketitle
\abstract{We study approachability theory in the presence of constraints.
Given a repeated game with vector payoffs,
we characterize the pairs of sets $(A,D)$ in the payoff space such that Player~1 can guarantee that the long-run average payoff converges to the set $A$, while
the average payoff always remains in $D$.}

\section{Introduction}\label{se:intro}

Approachability theory, which was first introduced in \citet{blackwell},
is an extension of the theory of zero-sum strategic-form games to the situation where the outcome is multidimensional.
In a two-player repeated game in which the outcome is an $n$-dimensional vector,
a target set $A$ in $\dR^n$ is \emph{approachable} by Player~1 if he has a strategy that ensures that the long-run average payoff converges to the set,
whatever strategy Player~2 uses.
\citet{blackwell} provided a geometric condition that ensures that a set is approachable by Player~1.
\citet{Hou} and \citet{Spinat} completed the characterization of approachable sets,
by showing that a set is approachable only if it contains a set that satisfies Blackwell's geometric condition.

Approachability theory was used to study no regret with partial monitoring (see \citet{Perchet2009}, \citet{lehrer2007learning}, and \citet{perchet2014unified}).
In fact, approachability is equivalent to regret minimization and calibration (see \citet{cesa2006prediction} and \citet{Perchet2014}).
The theory was also used to study continuous-time  network flows with capacity and unknown demand (\citet{bauso2010optimization}),
production-inventory problems in both discrete and continuous-time (\citet{khmelnitsky2004parallelism}),
and to construct normal numbers (\citet{lehrernormal}).
The geometric principle that lies behind approachability theory has been studied by \citet{Lehrer2002},
the rate of convergence to the target set was studied in \citet{MannorPerchet2013},
and variants of the basic notion of approachability have been studied by, e.g.,
\citet{Vieille1992},
\citet{LehrerSolan2009},
\citet{ShaniSolan},
\citet{MannorPerchetStoltz2014},
and
\citet{Bausoetal}.

In various situations, in addition to having a target set,
the player has constraints that have to be satisfied.
For example, an investment firm makes daily investment decisions and may have various goals, like maximizing the value of its portfolio,
keeping the value of its portfolio higher than the value of other investment firms,
and attracting investors.
The firm may also have various constraints, like keeping its Sharpe ratio above a certain level,
or never going bankrupt.
A second example concerns fellowships obtained by students in various universities.
In every quarter or semester the student has to dedicate her time to several activities, like studying and spending time with friends and family.
However, to keep receiving the fellowship, the student must keep her average grade above a certain level.

Our goal in this paper is to study approachability in the presence of constraints.
Specifically, we consider a two-player repeated game with vector payoffs, and are given two sets $A$ and $D$ in the payoff space.
The set $A$ is the target set that Player~1 would like to approach,
and the set $D$ represents the set of allowable average payoffs:
after any finite history that occurs with positive probability, the average payoff must be in $D$.
We call this problem \emph{approaching $A$ while remaining in $D$}.

If the set $A$ is approachable by Player~1 while remaining in the set $D$,
then necessarily the set $\overline{A} \cap \overline{D}$ is approachable by Player~1, where $\overline{A}$ and $\overline{D}$ are the closures of $A$ and $D$.
Since the outcome after the first stage must be in $D$,
a second necessary condition is that Player~1 has a safe action; that is, an action $s$ such that when Player~1 plays $s$ the outcome is in $D$,
whatever Player~2 plays.
Our first result is that when the set $D$ is open and convex, these two conditions are also sufficient to ensure that the set $A$ is approachable while remaining in $D$.
We moreover show that the rate of convergence of the average payoff to the set $A$ in our setup is the same as the rate of convergence given by \citet{blackwell} or \citet{MannorPerchet2013},
so that the presence of constraints does not slow down the rate of convergence to $A$.

We then study the case that the set $D$ is not convex, provide two sufficient conditions that guarantee that the set $A$ is approachable while remaining in $D$,
and provide an example that shows that the conditions are not necessary.
This example shows the difficulty in providing a general characterization of the pairs of sets $(A,D)$ such that $A$ is approachable by Player~1 while remaining in $D$.

In Section~\ref{se:model_main} we define the model, define the concept of approachability while remaining in a set, and state and prove our main result.
Section~\ref{se:not_convex} is devoted to the case in which the set $D$ is not convex.
Finally, in Section~\ref{section:open} we mention two open problems.

\section{The Model and the Main Result}\label{se:model_main}

A two-player {\em repeated game with vector-payoffs} is a triplet $(I,J,U)$,
where $I$ and $J$ are finite sets of actions for the two players,
and $U = I \times J \to \dR^n$ is a
vector-payoff matrix.
We assume w.l.o.g.~that payoffs are nonnegative and bounded by 1, that is $0 \leq U_k(i,j) \leq 1$ for every $i \in I$, every $j \in J$, and every $1 \leq k \leq n$.
To eliminate trivial cases we assume that both players have at least two actions: $|I| \geq 2$ and $|J| \geq 2$.

At every stage $t \geq 1$ the two players choose, independently and simultaneously, a pair of actions $(i_t,j_t) \in I \times J$,
each one in his action set, which is observed by both players.%
\footnote{As in \citet{blackwell}, for our results it is sufficient to assume that the players observe the outcome $U(i_t,j_t)$
at every stage $t$.}
A \emph{finite history} of length $t$ is a sequence $h_t  = (i_1,j_1,i_2,j_2,\cdots,i_t,j_t) \in (I \times J)^t$ for some $t \geq 0$.
The empty history is denoted $\emptyset$.
We denote by $H = \cup_{t=0}^\infty (I \times J)^t$ the set of all finite histories.
When $h_t  = (i_1,j_1,i_2,j_2,\cdots,i_t,j_t) \in H$ and $t' \leq t$ we denote by $h_{t'} = (i_1,j_1,i_2,j_2,\cdots,i_{t'},j_{t'})$
the prefix of $h_t$ of length $t'$.

We assume perfect recall, and consequently by Kuhn's Theorem we can restrict attention to behavior strategies.
A (behavior) \emph{strategy} for Player~1
(resp.~Player~2) is a function $\sigma : H \to \Delta(I)$
(resp.~$\tau : H \to \Delta(J)$), where $\Delta(A)$ is the space of
probability distributions over $A=I,J$. We denote by $\mathcal{S}$
and $\mathcal{T}$ the strategy spaces of the players 1 and 2,
respectively.

The set $F$ of \emph{feasible payoffs} is the convex hull of possible one stage payoffs, that is,
\[ F:= conv\{ U(i,j) \colon (i,j) \in I \times J \} \subset \mathbb{R}^n. \]

The set of infinite plays is $H^\infty = (I \times J)^\infty$.
This set,
when supplemented with the sigma-algebra generated by all finite cylinders,
is a measure space.
Every pair of strategies $(\sigma,\tau) \in \mathcal{S} \times \mathcal{T}$
defines a probability distribution $P_{\sigma,\tau}$ over $H$. We denote $E_{\sigma,\tau}$ the expectation with respect to
this probability distribution.

The average payoff vector up to stage $t$ is
\[ g_t := \frac{1}{t}\sum_{k=1}^t U(i_k,j_k). \]
Note that for every $t \in \dN$, the average payoff vector $g_t$ is a random variable
with values in $\dR^n$, whose distribution is determined by the
strategies of both players.
When we wish to calculate the average payoff along a finite history $h_t$ we use the notation $g_t(h_t)$.

Let $d(x,y) := ||x - y||_2$ denote the  euclidean distance between the points $x$ and $y$ in $\dR^n$.

\cite{blackwell} defined the concept of approachable sets in repeated games with vector payoffs.
A subset $A \in \mathbb{R}^n$ is \emph{approachable} by Player~1 if there exists a strategy $\sigma \in \calS$
such that for every $\epsilon>0$ there exists an integer $T$ such that for every strategy $\tau \in \calT$ of Player~2 we have:

$$P_{\sigma,\tau} \Bigl[d(g_t,A)<\epsilon, \forall t\geq T \Bigl] > 1-\epsilon.$$

This paper concerns the concept of approachability with
constraints, which is defined as follows.

\begin{definition}\label{def:approch_constr}
Let $A$ and $D$ be two subsets of $\mathbb{R}^n$.
The set $A$ is \emph{approachable by Player~1 while remaining in the set} $D$
if there exists a strategy $\sigma \in \calS$ such that for every $\epsilon>0$
there exists an integer $T \in \dN$ such that for every strategy $ \tau \in \calT$ of Player~2, we have:

\begin{eqnarray}
\label{equ:1}
P_{\sigma,\tau} \Bigl[\forall t\geq T, ~d(g_t,A)<\epsilon \Bigl] &>& 1-\epsilon,\\
P_{\sigma,\tau} \Bigl[ \forall t \geq 1,~ g_t \in D \Bigl]&=&1.
\label{equ:2}
\end{eqnarray}
\end{definition}

Condition (\ref{equ:1}) ensures that the strategy $\sigma$ approaches the set $A$.
Condition (\ref{equ:2}) is concerned with the constraints:
when playing $\sigma$, Player~1 guarantees that the sequence of realized average payoffs always remains in the set $D$.
Our main goal is the characterization of the pairs of sets $(A,D)$ such that $A$ is approachable by Player~1 while remaining in $D$.

For every mixed action $p \in \Delta(I)$ define
\[ R_1(p):=\{ U(p,q) \colon q \in \Delta(J)\} = \conv\{ U(p,j) \colon j \in J\}. \]
This is the set of all possible expected outcomes when Player~1 plays the mixed action $p$.
For every set $X$ in a Euclidean space we denote by $\overline{X}$ the closure of $X$.

The following lemma lists two necessary conditions to approaching $A$ while remaining in $D$.
\begin{lemma}
Let $A$ and $D$ be two subsets of $\dR^n$.
If the set $A$ is approachable by Player~1 while remaining in the set $D$, then the following two conditions hold.
\begin{enumerate}
\item[(C1)]
The set $\overline{A} \cap \overline{D}$ is approachable by Player~1.
\item[(C2)]
There exists an action $s \in I$ such that for every action $j \in J$ we have $U(s,j) \in D$.
\end{enumerate}
\end{lemma}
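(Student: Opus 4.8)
The plan is to derive each condition directly from the definition of approaching $A$ while remaining in $D$. Fix a strategy $\sigma \in \calS$ witnessing that $A$ is approachable by Player~1 while remaining in $D$, and let $T = T(\ep)$ be the associated integers from Definition~\ref{def:approch_constr}.

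For condition (C2), I would use equation~(\ref{equ:2}) applied to the very first stage, $t=1$. Since $g_1(h_1) = U(i_1,j_1)$, the requirement $P_{\sigma,\tau}[g_1 \in D] = 1$ for every $\tau$ says that every outcome $U(i,j)$ that can be realized at stage~$1$ lies in $D$. If $\sigma(\emptyset)$ puts positive weight on two distinct actions $i, i'$, then by choosing $\tau$ appropriately (any $\tau$ works here, since Player~2's action is also realized), every $U(i,j)$ with $i$ in the support of $\sigma(\emptyset)$ and $j$ arbitrary is in $D$; picking such an $i =: s$ gives $U(s,j) \in D$ for all $j \in J$. More carefully: let $s$ be any action in the support of $\sigma(\emptyset)$; for each $j \in J$ there is a strategy $\tau$ of Player~2 playing $j$ at stage~$1$, and then $(i_1,j_1) = (s,j)$ occurs with positive probability, so $U(s,j) = g_1 \in D$ by~(\ref{equ:2}). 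This proves (C2).

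For condition (C1), the idea is that the same $\sigma$ witnesses approachability of $\overline{A} \cap \overline{D}$ in the ordinary Blackwell sense. By~(\ref{equ:1}), for every $\ep > 0$ there is $T$ such that $P_{\sigma,\tau}[\forall t \ge T,\ d(g_t, A) < \ep] > 1 - \ep$ for all $\tau$; hence also $d(g_t, \overline{A}) < \ep$ on this event. By~(\ref{equ:2}), $g_t \in D \subseteq \overline{D}$ always, so $d(g_t, \overline{D}) = 0$. On the event where $d(g_t, A) < \ep$ for all $t \ge T$, for each such $t$ there is a point $a_t \in A$ with $d(g_t, a_t) < \ep$; since $g_t \in \overline{D}$ and $\overline{D}$ is closed, and $a_t$ is within $\ep$ of $g_t$, one does not immediately get a point of $\overline{A} \cap \overline{D}$ nearby — this is the subtle point. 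The clean fix is to observe that $d(g_t, \overline{A} \cap \overline{D})$ need not be controlled by $d(g_t, \overline{A})$ alone in general, but here $g_t$ itself is \emph{in} $\overline D$, so we only need a point of $\overline A$ near $g_t$ that is also in $\overline D$; since $g_t \in \overline D$, if additionally $g_t \in \overline A$ we are done, but $g_t$ is only close to $\overline A$. I expect this to be the main obstacle, and I would resolve it by noting that it in fact suffices to show $g_t$ converges to $\overline A$ while staying in $\overline D$, which is literally the statement that $\sigma$ approaches $\overline A$ while remaining in $\overline D$; and a standard lemma (or direct argument) shows that if a strategy approaches a set $A$ while the average always lies in a closed set $\overline D$, then it approaches $\overline A \cap \overline D$ — because any limit point of the sequence $(g_t)$ along a subsequence realizing $d(g_t,A)\to 0$ lies in both $\overline A$ and $\overline D$, and one upgrades this from limit points to the full $\limsup$ of the distance using compactness of $F$ and the uniform-in-$\tau$ quantifiers. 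Writing this out carefully, using that $F$ is compact and $(g_t) \subseteq F$, gives (C1).

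In summary, (C2) is immediate from the first-stage constraint, and (C1) follows by combining the two defining conditions and using closedness of $\overline{D}$ together with compactness of $F$; the only place requiring care is passing from ``$g_t$ is eventually $\ep$-close to $A$ and always in $\overline D$'' to ``$g_t$ is eventually $\ep$-close to $\overline A \cap \overline D$,'' which I would handle via a compactness argument on limit points.
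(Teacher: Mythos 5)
Your proposal is correct and follows essentially the same route as the paper: (C2) from the stage-one constraint via an action in the support of $\sigma(\emptyset)$ against a strategy of Player~2 realizing each $j$, and (C1) by noting that the averages stay in $\overline{D}$ and approach $A$, so (by an accumulation-point/compactness argument on the compact set $F$) they approach $\overline{A}\cap\overline{D}$ with the same strategy $\sigma$. The subtlety you flag about passing from $d(g_t,A)<\ep$ and $g_t\in\overline{D}$ to closeness to $\overline{A}\cap\overline{D}$ is real, and your compactness fix is exactly what the paper's terse accumulation-point argument implicitly relies on.
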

An action $s \in I$ that satisfies condition (C2) is termed a \emph{safe action},
since it ensures that the stage payoff is in $D$.

\bigskip

\begin{proof}
We first argue that Condition (C1) is necessary.
Suppose that the set $A$ is approachable by Player~1 while remaining in the set $D$,
and let $\sigma$ be a strategy for Player~1 that guarantees that the average payoff converges to $A$
while remaining in $D$.
By Eq.~(\ref{equ:1}), any accumulation point of the sequence of average payoffs lies in $\overline A$, regardless of the strategy used by Player~2.
Since the average payoff remains in $D$, any accumulation point of the sequence of average payoffs lies in $\overline D$.
Hence any accumulation point of the sequence of average payoffs lies in the set $\overline A \cap \overline D$,
which implies that the set $\overline A \cap \overline D$ is approachable by Player~1.

We now argue that Condition (C2) is necessary.
Denote by $s$ any action that the strategy $\sigma$ plays with positive probability at the first stage.
Let $\tau$ be any strategy for Player~2 that plays all actions in $J$ with positive probability.
Under the strategy pair $(\sigma,\tau)$ the probability that the action pair $(s,j)$ is played is positive,
and therefore
Eq.~(\ref{equ:2}) implies that $U(s,j) \in D$.
\end{proof}

Our main result is a characterization of the pairs of sets $(A,D)$ such that the set $A$ is approachable while remaining in $D$,
and it is valid whenever the set $D$ is open and convex.
Convexity is a natural assumption,
as often constraints have the form of linear inequalities.
The requirement that the set $D$ is open means that these inequalities should be strict.
The characterization states that when the set $D$ is open and convex,
Conditions~(C1) and~(C2) are also sufficient.

\begin{theorem}\label{thm:main_thm}
Let $A$ be a subset of $\mathbb{R}^n$ and let $D$ be an open and convex subset of $\mathbb{R}^n$.
Player~1 can approach $A$ while remaining in $D$ if and only if Conditions~(C1) and~(C2) hold.
\end{theorem}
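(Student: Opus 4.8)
The plan is to prove the nontrivial direction: assuming (C1) and (C2), construct a strategy that approaches $A$ while staying in $D$. The necessity of (C1) and (C2) is already established in the preceding lemma. The core idea is a two-track construction that interleaves a Blackwell-type approachability strategy (which drives the average toward $A$) with the safe action $s$ (which, by (C2), keeps stage payoffs — and hence, by convexity, running averages — inside $D$), using the safe action only as a "correction" that is invoked rarely enough not to spoil convergence but often enough to keep the average inside $D$.

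First I would record the geometric consequences of the hypotheses. By (C1), the set $B := \overline{A}\cap\overline{D}$ is approachable by Player~1; fix a Blackwell strategy $\sigma^*$ for $B$ together with its associated steering directions, i.e. for each history the mixed action $p$ such that the hyperplane through the closest point of $B$ separates $R_1(p)$ from the current average. Since $d(g_t,B)\to 0$ implies $d(g_t,A)\to 0$ (as $B\subseteq\overline A$), it suffices to approach $B$ while remaining in $D$. Because $D$ is open and convex and $U(s,j)\in D$ for all $j$, the point $u_s$ in the relative picture is "interior-like": there is a whole neighborhood structure, and crucially any convex combination $\lambda g + (1-\lambda) y$ with $g\in D$ and $y$ in the convex hull of $\{U(s,j):j\in J\}\subseteq D$ stays in $D$. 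I would also fix a point $c\in D$ that is robustly interior — e.g. take $c$ in the convex hull of $\{U(s,j)\}$, or simply note $U(s,j)\in D$ for all $j$ — so that $c$ can be reached by playing $s$ regardless of Player~2.

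The main construction: run $\sigma^*$ by default, but monitor $d(g_t, D^c)$ (the distance of the current average to the complement of $D$). Whenever this distance drops below a threshold $\delta_t$ — which I will let shrink slowly, say $\delta_t\sim t^{-\alpha}$ for a suitable $\alpha\in(0,1/2)$ — Player~1 switches to playing the safe action $s$ for a block of stages, pulling the average back toward the safe region; once the average is comfortably inside $D$, revert to $\sigma^*$. Two things must be checked. (i) \emph{Feasibility:} the averages never leave $D$. This is where openness and convexity do the work: a single stage moves the average by at most $O(1/t)$, so by choosing the switching threshold to be a large constant multiple of $1/t$ one guarantees that a correction is triggered while the average is still strictly inside $D$, and each safe step moves the average along the chord toward $\{U(s,j)\}\subseteq D$, hence stays in $D$ by convexity; an induction on $t$ closes this. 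A cleaner variant: at stage $t$, let Player~1 play the mixture $(1-\eta_t)p_t + \eta_t \delta_s$ where $p_t$ is the Blackwell action and $\eta_t\downarrow 0$ is chosen so the resulting average provably stays in $D$; openness gives room for $\eta_t>0$, and $\sum \eta_t/t$ being handled carefully. (ii) \emph{Approachability and rate:} the corrections are rare — their total frequency is $o(1)$, in fact summing their lengths gives $o(t)$ — so the average payoff under the modified strategy differs from that under a genuine Blackwell strategy for $B$ by a vanishing amount, and one reproduces Blackwell's $O(1/\sqrt t)$ bound (or the Mannor–Perchet rate) up to constants. I would make this precise by a potential/drift argument: $\mathbb{E}[d(g_{t+1},B)^2]$ decreases at the Blackwell rate on non-correction stages and increases by at most $O(1/t^2)$ per correction stage, and $\sum 1/t^2 < \infty$.

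The step I expect to be the main obstacle is reconciling feasibility with the rate: the naive fix of "play $s$ whenever near the boundary" could, in principle, require correction blocks whose cumulative length is not $o(t)$ if Player~2 repeatedly pushes the average toward $\partial D$ from a direction that the Blackwell steering also happens to favor. The resolution is to exploit that $B\subseteq\overline D$, so the Blackwell strategy for $B$ is itself already steering \emph{toward} $\overline D$; the separating hyperplane for $B$ at a point near $\partial D$ can be shown to point inward relative to $D$ (or at worst tangent), so the "conflict" directions are controlled, and the correction frequency is genuinely $o(1)$. Making this quantitative — bounding the number of correction stages in $[1,t]$ by $o(t)$ uniformly over Player~2 — is the technical heart of the argument, and I would handle it by combining the drift estimate with the observation that each correction strictly increases $d(g_t,D^c)$ by a definite amount relative to the threshold, limiting how often the threshold can be re-crossed.
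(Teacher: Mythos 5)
Your construction is essentially the paper's: run an approachability strategy for $\overline{A}\cap\overline{D}$ by default, and switch to the safe action $s$ whenever the running average comes within an $O(1/t)$ threshold of $F\setminus D$; convexity of $D$ together with $R_1(s)\subseteq D$ handles feasibility exactly as you argue. The genuine gap is the step you yourself flag as the ``technical heart'': a bound, uniform over Player~2's strategies, on the number of correction stages. None of your proposed mechanisms closes it. A drift bound on $\mathbb{E}\bigl[d(g_{t+1},B)^2\bigr]$ controls convergence to $B$, not the number of threshold crossings; the observation that each safe step increases the depth $d(g_t,F\setminus D)$ ``by a definite amount'' only yields a gain of order $\delta/t$ per step, while a single stage of the Blackwell phase can push the average back toward the boundary by the same order $1/t$, so per-crossing accounting does not rule out corrections occupying a constant fraction of stages; and the claim that the Blackwell separating hyperplane near the boundary of $D$ ``points inward relative to $D$'' is unsubstantiated --- that hyperplane is determined by the closest point of $\overline{A}\cap\overline{D}$ to the current average and has no a priori relation to the local geometry of the boundary of $D$, and in any case it constrains only the expected stage payoff, whereas the constraint must hold for realized payoffs with probability one. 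For the same reason your ``cleaner variant'' of playing the mixture $(1-\eta_t)p_t+\eta_t s$ fails: near the boundary, with probability $1-\eta_t$ an unsafe realized payoff can exit $D$, violating the almost-sure requirement.

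The paper closes the gap with an aggregate argument you are missing. At any near-boundary time $m$, write $g_m=\tfrac{f}{m}\alpha_m+\tfrac{m-f}{m}\beta_m$, where $f$ is the number of safe stages, $\alpha_m\in R_1(s)$ is the average over safe stages (hence at depth at least $\delta:=d(R_1(s),F\setminus D)>0$), and $\beta_m$ is the average over the remaining stages, which by the Blackwell rate is within $O(1/\sqrt{m-f})$ of $\overline{A}\cap\overline{D}\subseteq\overline{D}$ with high probability. The convexity-based inequality $d\bigl(\lambda x+(1-\lambda)y,\,F\setminus D\bigr)\ \geq\ \lambda\,d(x,F\setminus D)-(1-\lambda)\,d(y,D)$ then converts ``$g_m$ is within $3/m$ of $F\setminus D$'' into $f\leq O(\sqrt{m}/\delta)$, uniformly in Player~2's strategy; this is what makes corrections rare and preserves the $O(1/\sqrt{t})$ rate. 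A further bookkeeping point you omit: the approachability strategy must be fed only the history of the non-safe stages (the paper's map $\varphi$), so that its guarantee applies to $\beta_m$ on that virtual clock; running it on the full history and full average would void the rate bound you need for $\beta_m$.
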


We note that Condition (C2), together with the assumption that $D$ is convex,
implies that $R_1(s) \subseteq D$.
Because both $R_1(s)$ and $F \setminus D$ are closed, we have $\delta:=d(F \setminus D,R_1(s))>0$.

The following example shows that when $D$ is not open,
Conditions~(C1) and~(C2) are not sufficient to imply that $A$ is approachable while remaining in $D$.

\begin{figure}
    \label{figure:example1}
    \center{
    $\begin{array}{c|c|c|}
    & L & R \\
    \hline
    T & (0,1) & (0,1) \\
    \hline
    B_1 &  (1,0) & (-1,0)\\
    \hline
    B_2 & (-1,0)  & (1,0)\\
    \hline
    \end{array}$ \ \ \ \ \ \ \ \ \ \
    \includegraphics{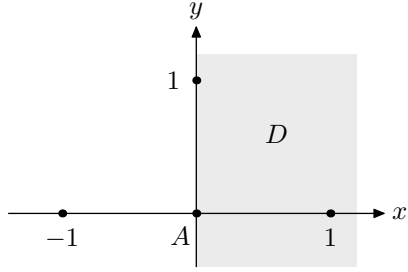}}
    \caption{The payoff matrix in Example \ref{example:11} and the payoff space.}
\end{figure}

\begin{example}
\label{example:11}
Consider the game that is depicted in Figure~\arabic{figurecounter} with $A = (0,0)$ and $D= \{ (x,y) \in \dR^2 \colon x \geq 0\}$.
For convenience we consider, in this example and in the following ones, payoffs that do not necessarily belong to the interval $[0,1]$.

The set $A = \overline{A} \cap \overline{D}$ is approachable by Player~1, for example,
by the stationary strategy $[\tfrac{1}{2}(B_1),\tfrac{1}{2}(B_2)]$,
and therefore Condition (C1) holds.
The action $T$ is a safe action for Player~1, and therefore Condition (C2) holds.

We argue that the set $A$ is not approachable by Player~1 while remaining in $D$.
Indeed, to approach $A$ Player~1 has to play one of the actions $B_1$ and $B_2$ at least once
(in fact, with probability 1 he should play these action infinitely often).
Suppose that Player~2 plays the stationary strategy $[\tfrac{1}{2}(L),\tfrac{1}{2}(R)]$.
Then in the first stage in which Player~1 plays $B_1$ or $B_2$, there is a probability $\tfrac{1}{2}$ that the outcome is $(-1,0)$,
in which case the first coordinate of the average payoff is negative,
so that the average payoff is not in $D$.
\end{example}
\addtocounter{figurecounter}{1}

Example \ref{example1} below shows that when $D$ is not convex,
Conditions (C1) and (C2) are not sufficient to imply that $A$ is approachable while remaining in $D$.

The rest of this section is devoted to the proof of Theorem~\ref{thm:main_thm}.

\subsection{$B$-Sets and Blackwell's Characterization of Approachable Sets}

\citet{blackwell} provided a geometric characterization for approachable sets (without constraints).
The basic concept that Blackwell used was that of $B$-sets.
In this subsection we review the definition of $B$-sets and Blackwell's characterization.

A \emph{hyperplane} in $\dR^n$ is any set of the form $\mathcal{H} := \{x \in \dR^n \colon \sum_{k=1}^n \alpha^k x^k = \beta\}$,
where $\alpha^1,\cdots,\alpha^k,\beta \in \dR$.
For every hyperplane $\mathcal{H}$ 
 we denote by
$\mathcal{H}^+ := \{x \in \dR^n \colon \sum_{k=1}^n \alpha^k x^k \geq \beta\}$ and
$\mathcal{H}^- := \{x \in \dR^n \colon \sum_{k=1}^n \alpha^k x^k \leq \beta\}$.
These are the two half-spaces defined by $\mathcal{H}$.

\begin{definition}\label{def:F_Bset}
A set $A \subset \mathbb{R}^n$ is a \emph{$B$-set} for Player~1 if for every point $x \in F \setminus A$
there exists a point $y \in \overline{A}$ and a mixed action $p \in \Delta(I)$ such that
(a) $y$ minimizes the distance to $x$ among the points in $A$, and
(b) the hyperplane $\mathcal{H}$ that is (i) perpendicular to the line that connects $x$ to $y$ and (ii) passes through $y$, separates $x$ from $R_1(p)$,
that is, $x \in \mathcal{H}^-$ and $R_1(p) \subseteq \mathcal{H}^+$, or $x \in \mathcal{H}^+$ and $R_1(p) \subseteq \mathcal{H}^-$.
\end{definition}

\cite{blackwell} proved that every B-set is approachable.
\cite{Hou} and \cite{Spinat} proved that every approachable set contains a B-set.

In addition, for every B-set $A$ for Player~1 Blackwell identified a strategy for Player~1 that guarantees that the average payoff converges to $A$ at a rate of
$O(\frac{1}{\sqrt{t}})$, where $t$ is the number of stages played so far; that is, there is a strategy $\sigma$ for Player~1
such that for every $\ep > 0$ there is a constant $c > 0$ such that for every strategy $\tau$ of Player~2 and every $t \in \dN$,
\begin{equation}
\label{equ:rate}
P_{\sigma,\tau}[d(g_t,A) < \tfrac{c}{\sqrt{t}}] \geq 1-\ep,
\end{equation}
see, e.g., \cite{cesa2006prediction}, Remark 7.7 and Exercise 7.23.

\subsection{A definition of a strategy $\sigma^*$.}
We now define a strategy $\sigma^*$ for Player~1 that, as will be shown later, approaches $A$ while remaining in $D$.
This strategy is based on two components:
a strategy $\widehat\sigma$ that approaches the set $\overline{A} \cap \overline{D}$,
and a set $H^*$ of finite histories that we now define.
Roughly, the strategy $\sigma^*$ follows the strategy $\widehat\sigma$ that approaches the set $\overline{A} \cap \overline{D}$,
but whenever the average payoff gets close to the boundary of $D$,
it plays the safe action, ensuring that the average payoff gets farther away from the boundary.

Let $H^* \subseteq H$ be the set of all finite histories $h_t \in H$ that satisfy $d(g_t(h_t),F \setminus D) \leq \tfrac{3}{t}$.
We also add the empty history $\emptyset$ to $H^*$.
Thus, the complement $H \setminus H^*$ contains all finite histories $h_t \in H$ that satisfy $d(g_t(h_t),F \setminus D) > \tfrac{3}{t}$.
This implies that whatever Player~1 plays after a history $h_t \not\in H^*$, the average payoff after stage $t+1$ will be in $D$.

For every finite history $h_t \in H$ denote by $\varphi(h_t)$ the part of $h_t$ that is played after stages $t'$ such that $h_{t'} \notin H^*$.
 Formally, we define $\varphi$ recursively as follows.
\begin{eqnarray}
\varphi(\emptyset) &:=& \emptyset,\\
\varphi(h_t,i_{t+1},j_{t+1}) &:=& \left\{
\begin{array}{lll}
\varphi(h_t) & \ \ \ \ \ & \hbox{if } h_t \in H^*,\\
\varphi(h_t) \circ (i_{t+1},j_{t+1}) & & \hbox{if } h_t \not\in H^*,
\end{array}
\right.
\end{eqnarray}
where $\varphi(h_t) \circ (i_{t+1},j_{t+1})$ is the concatenation of the finite history $\varphi(h_t)$ with the action pair $(i_{t+1},j_{t+1})$.

We turn to the definition of the strategy $\sigma^*$.
Let $\sigma^*$ be the strategy of Player~1 that plays the safe action $s$ whenever $h_t \in H^*$,
and plays the strategy that approaches $\overline{A} \cap \overline{D}$ whenever $h_t \not\in H^*$,
ignoring the stages that were played after subhistories in $H^*$. Formally,
\[ \sigma^*(h_t) :=
\left\{
\begin{array}{lll}
s & \ \ \ \ \ & \hbox{if } h_t \in H^*,\\
\widehat\sigma(\varphi(h_t)) & & \hbox{if } h_t \not\in H^*.
\end{array}
\right.
\]
We will prove that the strategy $\sigma^*$ approaches $A$ while remaining in $D$.
This will be done in four steps.
\begin{enumerate}
\item   In Lemma~\ref{prop_inD} we prove that when Player~1 plays $\sigma^*$ the average payoff always remains in $D$,
whatever Player~2 plays.
\item   In Lemma~\ref{lemma:uniform:1} we prove that the frequency of stages in which $\sigma^*$ plays the safe action goes to 0.
\item   In Lemma~\ref{lemma:distance_bound} we prove a geometric inequality used in the proof of Lemma \ref{lemma:uniform:1}.
\item   In Lemma~\ref{lemma:uniform:2} we prove that the strategy $\sigma^*$ approaches the set $A$.
\end{enumerate}

\begin{lemma}\label{prop_inD}
For any strategy $\tau$ of player 2, we have:
$$P_{\sigma^*,\tau}\left[\exists t \geq 1, ~ g_t \in D\right]=0.$$
\end{lemma}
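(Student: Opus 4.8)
A preliminary remark is in order: the printed identity must contain a typographical slip, since under $\sigma^*$ the empty history lies in $H^*$, so $\sigma^*(\emptyset)=s$ and $g_1=U(s,j_1)\in R_1(s)\subseteq D$ with probability one; hence the event $\{\exists t,\ g_t\in D\}$ is \emph{certain}, not null, and the printed value $0$ cannot hold. The statement matching the roadmap and Condition~(\ref{equ:2}) is $P_{\sigma^*,\tau}[\exists t\ge 1,\ g_t\notin D]=0$, equivalently $P_{\sigma^*,\tau}[\forall t\ge 1,\ g_t\in D]=1$, and this is what I would prove. In fact I would establish the stronger \emph{deterministic} fact: along every play path consistent with $\sigma^*$ one has $g_t\in D$ for all $t\ge 1$, whatever Player~2 does; since this holds path-wise it holds with probability one under any $\tau$.

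The plan is a two-case induction on $t$. For the base case $t=1$, the empty history belongs to $H^*$ by construction, so $\sigma^*$ plays the safe action and $g_1=U(s,j_1)\in R_1(s)\subseteq D$, the inclusion $R_1(s)\subseteq D$ being the one recorded in the remark after Theorem~\ref{thm:main_thm}. For the inductive step I assume $g_t\in D$ and split according to the two branches in the definition of $\sigma^*$. If $h_t\notin H^*$, then $d(g_t,F\setminus D)>\tfrac{3}{t}$ by definition of $H^*$; writing $g_{t+1}-g_t=\tfrac{1}{t+1}\bigl(U(i_{t+1},j_{t+1})-g_t\bigr)$ and using that both $g_t$ and $U(i_{t+1},j_{t+1})$ lie in $F$, the one-step displacement is at most $\tfrac{1}{t+1}\,\mathrm{diam}(F)$, which is dominated by $\tfrac{3}{t}$. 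Hence $d(g_{t+1},F\setminus D)\ge d(g_t,F\setminus D)-\|g_{t+1}-g_t\|>0$, and since $F\setminus D$ is closed (as $D$ is open and $F$ compact) and $g_{t+1}\in F$, this forces $g_{t+1}\in D$. This is exactly the assertion made immediately after the definition of $H^*$, so here I am merely supplying its justification.

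The remaining case is $h_t\in H^*$, which is the point of the construction. Here $\sigma^*$ plays the safe action, so $U(i_{t+1},j_{t+1})\in R_1(s)\subseteq D$, and $g_{t+1}=\tfrac{t}{t+1}g_t+\tfrac{1}{t+1}U(i_{t+1},j_{t+1})$ is a convex combination of $g_t\in D$ (induction hypothesis) and a point of $D$; convexity of $D$ then yields $g_{t+1}\in D$. I expect the only delicate point to be the displacement estimate in the case $h_t\notin H^*$: one must check that the threshold $\tfrac{3}{t}$ genuinely dominates the one-step movement $\tfrac{1}{t+1}\|U(i_{t+1},j_{t+1})-g_t\|$ for all $t$, which requires the constant to exceed $\mathrm{diam}(F)$ and hence relies on the normalization $U\in[0,1]^n$. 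Everything else is a routine induction resting solely on the convexity of $D$ and on the two defining branches of $\sigma^*$.
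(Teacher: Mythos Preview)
Your proposal is correct and follows essentially the same argument as the paper: after noting the evident typo (the paper also proves $g_t\in D$ for all $t$, not the printed identity), both proceed by induction on the stage, splitting into the two cases $h_{t-1}\in H^*$ and $h_{t-1}\notin H^*$, using convexity of $D$ in the first case and the one-step displacement bound $\|g_t-g_{t-1}\|\le \tfrac{1}{t}\,\mathrm{diam}(F)$ against the threshold $\tfrac{3}{t-1}$ in the second. Your pathwise formulation and your explicit caveat that the constant $3$ must dominate $\mathrm{diam}(F)$ are in fact slightly more careful than the paper, which simply asserts $d(g_{t-1},g_t)\le \tfrac{2}{t}$ without tracking the dimension.
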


The properties that are needed to prove Lemma~\ref{prop_inD} are that
(a) after histories in $H^*$ the strategy $\sigma^*$ plays a safe action $s$
and (b) the set $D$ is convex.


\begin{proof}
Fix a strategy $\tau$ of Player~2.
For every finite history $h_t \in H$
denote by $P_{\sigma,\tau}(h_t)$ the probability that under the strategy pair $(\sigma,\tau)$ the realized history of length $t$ is $h_t$.
We will prove by induction on $t$ that for every history $h_t \in H$ that satisfies $P_{\sigma,\tau}(h_t) > 0$ we have $g_t(h_t) \in D$.

Since at the first stage the strategy $\sigma$ plays a safe action, the claim holds for $t=1$.
Assume then by induction that the claim holds for $t-1$, and let $h_t$ be a history of length $t$ with $P_{\sigma,\tau}(h_t) > 0$.
In particular, $h_{t-1}$, the prefix of $h_t$ of length $t-1$, satisfies $P_{\sigma,\tau}(h_{t-1}) > 0$,
and consequently by the induction hypothesis we have $g_{t-1}(h_{t-1}) \in D$.

If $h_{t-1} \in H^*$ then at stage $t$ the strategy $\sigma$ plays a safe action.
Since the set $D$ is convex, since $g_{t-1}(h_{t-1}) \in D$, and since $R_1(s) \subseteq D$,
it follows that $g_t(h_t) \in D$.

If $h_{t-1} \not\in H^*$ then $d(g_{t-1}(h_{t-1}),F \setminus D) > \tfrac{3}{t-1}$.
Because payoffs are bounded by $1$, we have that $d(g_{t-1}(h_{t-1}),g_t(h_t))\leq \frac{2}{t}$.
This implies that $d(g_t(h_t),F \setminus D) > 0$, so that $g_t(h_t) \in D$ as well.
\end{proof}

Denote by $f(h_t)$ the number of times along the finite history $h_t$ in which the history $h_{t'}$ belongs to $H^*$ for $t' < t$, that is,
\[ f(h_t) := \#\{ 0 \leq t' < t \colon h_{t'} \in H^*\}. \]
For every finite history $h_t \in H$, we denote the average payoff during the stages in which the history was in $H^*$ by
\[ \alpha_t := \frac{1}{f(h_t)} \sum_{0 \leq t' < t, h_{t'} \in H^*} U(i_{t' + 1},j_{t' + 1}) =  \frac{1}{f(h_t)} \sum_{0 \leq t' < t, h_{t'} \in H^*} U(s,j_{t' + 1}), \]
and the average payoff up to stage $t$ during the stages in which the history was not in $H^*$ by
\[ \beta_t := \frac{1}{t-f(h_t)} \sum_{0 \leq t' < t, h_{t'} \not\in H^*} U(i_{t' + 1},j_{t' + 1}). \]
Note that $\alpha_t \in R_1(s)$, so in particular $d(\alpha_t,F \setminus D) \geq \delta$.
Note also that
\begin{equation}
\label{equ:payoff}
g_t = \frac{1}{t}\sum_{k=1}^t U(i_k,j_k) = \frac{f(h_t)}{t} \alpha_t + \frac{t-f(h_t)}{t} \beta_t.
\end{equation}


The following result, which provides a uniform upper bound on $f(h_t)$,
implies that along the play,
the safe action is played relatively rarely.

\begin{lemma}
\label{lemma:uniform:1}
For every $\ep > 0$ there exists a constant $c' \in \dR^{+}$ such that for every strategy $\tau$ of Player~2 we have
\[ P_{\sigma^*,\tau}\left[f(h_t) \leq \tfrac{c'\sqrt{t}}{\delta}, \ \ \ \forall t \in \mathbb{N}\right] \geq  1-\ep. \]
\end{lemma}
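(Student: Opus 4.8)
The plan is to work on the high-probability event on which the auxiliary strategy $\widehat\sigma$ realizes Blackwell's $O(1/\sqrt{\cdot})$ rate toward $\overline{A}\cap\overline{D}$, and to combine this with a geometric estimate showing that each safe stage pushes the running average a definite distance away from $F\setminus D$. Fix $\ep>0$. By Condition~(C1) the set $\overline{A}\cap\overline{D}$ is approachable, hence contains a $B$-set $B$; take $\widehat\sigma$ to be Blackwell's strategy for $B$, and let $c=c(\ep)>0$ be the constant from (the anytime form of) estimate~(\ref{equ:rate}). Since $\sigma^*$ plays $\widehat\sigma$ on exactly the stages $t'$ with $h_{t'}\notin H^*$, reading the reduced history $\varphi$ there, the restriction of the play to those stages is a faithful play of $\widehat\sigma$ against some behavior strategy of Player~2, and $\beta_t$ is precisely its length-$(t-f(h_t))$ average. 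Consequently, for every $\tau$ the event
\[
\Omega_\ep:=\Bigl\{\, d\bigl(\beta_t,\overline{A}\cap\overline{D}\bigr)\le \tfrac{c}{\sqrt{t-f(h_t)}}\ \text{ for every } t \text{ with } f(h_t)<t \,\Bigr\}
\]
has $P_{\sigma^*,\tau}(\Omega_\ep)\ge 1-\ep$; in particular $d(\beta_t,\overline{D})\le c/\sqrt{t-f(h_t)}$ on $\Omega_\ep$.

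Next I would isolate the geometric fact supplied by Lemma~\ref{lemma:distance_bound}. Put $\delta':=d\bigl(R_1(s),\mathbb{R}^n\setminus D\bigr)$, which is positive because $R_1(s)$ is compact, $\mathbb{R}^n\setminus D$ is closed, and $R_1(s)\subseteq D$; note $0<\delta'\le\delta$ and $d(\alpha_t,\mathbb{R}^n\setminus D)\ge\delta'$. The map $x\mapsto d(x,\mathbb{R}^n\setminus D)$ is concave on $\overline{D}$: if $B(a,r),B(b,r')\subseteq D$ then $B\bigl(\tfrac{a+b}{2},\tfrac{r+r'}{2}\bigr)=\tfrac12 B(a,r)+\tfrac12 B(b,r')\subseteq D$ by convexity. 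Hence, for $\alpha\in R_1(s)$, any $\beta$ with $d(\beta,\overline{D})\le\rho$, and any $\lambda\in[0,1]$, projecting $\beta$ onto $\overline{D}$, applying concavity, and using that the distance function is $1$-Lipschitz gives
\[
d\bigl(\lambda\alpha+(1-\lambda)\beta,\ \mathbb{R}^n\setminus D\bigr)\ \ge\ \lambda\delta'-(1-\lambda)\rho,
\]
and the same bound holds with $F\setminus D$ in place of $\mathbb{R}^n\setminus D$, since $F\setminus D\subseteq\mathbb{R}^n\setminus D$. The crucial feature is that the error term is proportional to $(1-\lambda)$, the weight carried by $\beta$.

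Now I would combine the two ingredients on $\Omega_\ep$. Fix $u\ge 1$ with $h_u\in H^*$ and write $m:=f(h_u)$, $\lambda:=m/u$. If $m=u$ then $g_u\in R_1(s)$, so $d(g_u,F\setminus D)\ge\delta'$, and $h_u\in H^*$ forces $u\le 3/\delta'$, a constant. If $m<u$, then by~(\ref{equ:payoff}) we have $g_u=\lambda\alpha_u+(1-\lambda)\beta_u$ with $\alpha_u\in R_1(s)$ and $d(\beta_u,\overline{D})\le c/\sqrt{u-m}$ on $\Omega_\ep$, so the geometric estimate together with $d(g_u,F\setminus D)\le 3/u$ yields
\[
\frac{3}{u}\ \ge\ \frac{m\delta'}{u}-\frac{u-m}{u}\cdot\frac{c}{\sqrt{u-m}}\ =\ \frac{m\delta'-c\sqrt{u-m}}{u},
\]
whence $m\delta'\le 3+c\sqrt{u-m}\le 3+c\sqrt{u}\le(3+c)\sqrt{u}$, that is $f(h_u)\le C_1\sqrt{u}$, where $C_1$ depends only on $\ep$ and the game (take $C_1:=\max\{(3+c)/\delta',\sqrt{3/\delta'}\}$, which also covers the case $m=u$). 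Finally, for arbitrary $t$ with $f(h_t)\ge 1$, let $u_0<t$ be the last stage with $h_{u_0}\in H^*$; then $f(h_{u_0})=f(h_t)-1$, so $f(h_t)\le C_1\sqrt{u_0}+1\le C_1\sqrt{t}+1\le(C_1+1)\sqrt{t}$. Taking $c':=(C_1+1)\delta$ gives $f(h_t)\le c'\sqrt{t}/\delta$ for every $t$ on $\Omega_\ep$, and $P_{\sigma^*,\tau}(\Omega_\ep)\ge 1-\ep$ for every $\tau$; the value $c'$ depends on $\ep$ through $c$, which is permitted.

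The step I expect to be the main obstacle is the first one: one needs Blackwell's estimate in the form that holds simultaneously at all stages with probability at least $1-\ep$ (rather than the per-stage form literally written in~(\ref{equ:rate})), and --- just as importantly --- one must justify that this ``anytime'' estimate legitimately applies to the interleaved play, since the stages on which $\sigma^*$ uses $\widehat\sigma$ form a subplay whose length is a random variable and on which Player~2's conduct is an arbitrary behavior strategy. Once that is in place, the geometric estimate is routine: the only thing to watch is that its error term is kept proportional to $(1-\lambda)$, because this is exactly what turns what would otherwise be a linear-in-$t$ bound into the $\sqrt{t}$ bound.
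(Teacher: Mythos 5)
Your proposal is correct and follows essentially the same route as the paper: you combine the anytime form of Blackwell's $O(1/\sqrt{\cdot})$ rate for the reduced play of $\widehat\sigma$ with the convexity-based distance inequality of Lemma~\ref{lemma:distance_bound} (which you rederive via concavity of $x\mapsto d(x,\mathbb{R}^n\setminus D)$ instead of citing it directly), evaluate the resulting bound at the last stage whose history lies in $H^*$, and solve for $f(h_t)$. Your handling of the all-safe case and of why the rate bound applies to the interleaved subplay is, if anything, slightly more careful than the paper's, but the argument is the same.
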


To prove Lemma~\ref{lemma:uniform:1} we need the following technical result.

\begin{lemma}
    \label{lemma:distance_bound}
    For every $x \in D$, every $y \in \dR^n$, and every $\lambda \in [0,1]$, we have
    $$d( \lambda x + (1-\lambda) y , F \setminus D ) \geq \lambda d(x, F \setminus D) - (1-\lambda) d(y, D)$$
\end{lemma}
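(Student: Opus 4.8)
The plan is to work directly with the definition of distance to the closed set $F \setminus D$ and to estimate how far the convex combination $z := \lambda x + (1-\lambda) y$ can be pushed toward that set. First I would dispose of the degenerate cases: if $F \setminus D = \emptyset$ the distance is $+\infty$ and there is nothing to prove, and if $\lambda = 0$ the claimed inequality reads $d(y, F\setminus D) \geq -d(y,D)$, which is trivially true since the left-hand side is nonnegative. So assume $\lambda \in (0,1]$ and $F \setminus D \neq \emptyset$.

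The core idea is a triangle-inequality argument. Fix an arbitrary point $w \in F \setminus D$; I want a lower bound on $d(z,w)$ that does not depend on $w$ beyond the quantity $d(y,D)$. Since $D$ is convex (hence so is its closure) and $x \in D$, while $w \notin D$, the segment from $x$ to $w$ exits $D$ at some point; more usefully, pick a point $y' \in \overline{D}$ with $d(y,y') = d(y,\overline D) = d(y,D)$ — such a nearest point exists because $\overline D$ is closed and convex. Now consider the point $\lambda x + (1-\lambda) y' \in \overline D$ (a convex combination of two points of the convex set $\overline D$). On one hand, $d(z, \lambda x + (1-\lambda) y') \leq (1-\lambda) d(y,y') = (1-\lambda) d(y,D)$. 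On the other hand, since $\lambda x + (1-\lambda) y' \in \overline D$ and $w \in F \setminus D \subseteq \dR^n \setminus D$, and since $x$ is in the \emph{open}-like interior sense... — here I would instead use the cleaner route below.

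The cleaner route: for the fixed $w \in F \setminus D$, write $w = \lambda w + (1-\lambda) w$ and estimate
\[
d(z,w) = \| \lambda x + (1-\lambda) y - w \| = \| \lambda (x - w) + (1-\lambda)(y - w) \| \geq \lambda \| x - w \| - (1-\lambda) \| y - w \|.
\]
Now $\| x - w \| \geq d(x, F\setminus D)$ since $w \in F \setminus D$. For the subtracted term I need an upper bound on $\|y - w\|$ valid for the particular minimizing $w$, but $w$ was arbitrary, so this does not immediately work — the term $\|y-w\|$ could be huge. The fix is to restrict attention to the $w$ that realize (approximately) the distance from $z$: let $w_n \in F \setminus D$ with $\|z - w_n\| \to d(z, F \setminus D)$. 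For such $w_n$, if $d(z,F\setminus D)$ were small then $w_n$ is close to $z$, hence close to the segment $[x,y]$, hence $\|y - w_n\|$ is controlled by $\|y - z\| + \varepsilon = (1-\lambda)\|y-x\| + \varepsilon$... still this brings in $\|y-x\|$, not $d(y,D)$.

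Therefore the right argument is genuinely the one with the nearest point $y'$ in $\overline D$: I claim $d(z, F\setminus D) \geq \lambda\, d(x, F\setminus D) - (1-\lambda)\, d(y,D)$ follows from the inclusion $\lambda x + (1-\lambda)y' \in \overline D$ combined with a scaling estimate. Concretely, let $w \in F \setminus D$ be arbitrary. Because $\overline D$ is convex, $x \in D \subseteq \overline{D}$, and $w \notin D$, for every $\mu \in [0,1)$ the point $\mu x + (1-\mu) w$ — no; the usable fact is: $d(w, \overline D)$-type reasoning. Let me state the step I will actually prove as a sublemma inside the proof: for any $w \notin D$ and any $v \in \overline D$, we have $\|v - w\| \le$ ... hmm, that is false in general.

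Given the subtlety, the step I expect to be the main obstacle is exactly this: converting "$d(y,D)$" — a one-sided quantity measuring how far $y$ pokes out of $D$ — into a usable bound, since $y$ may be arbitrarily far from everything. I believe the correct and clean argument is: take $w \in F\setminus D$ achieving $d(z, w) = d(z, F\setminus D)$ (it exists, $F \setminus D$ closed), let $y'$ be the point of $\overline D$ nearest to $y$, set $v := \lambda x + (1-\lambda) y' \in \overline D$, and note $w \neq v$ wait we need $w$ vs the segment. Use
\[
d(z, F\setminus D) = \|z - w\| \geq \|v - w\| - \|z - v\| \geq d(\overline D, F\setminus D) - (1-\lambda) d(y, D),
\]
but $d(\overline D, F \setminus D)$ can be $0$. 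So this is still not it, and the honest bound must use $x$ specifically, i.e. $d(\lambda x + (1-\lambda) y', F\setminus D)$, which by convexity of $\overline D$ and the "scaling toward $x$" satisfies $d(\lambda x + (1-\lambda) y', F\setminus D) \geq \lambda\, d(x, F\setminus D)$ — this last inequality is the real geometric content (a point obtained by moving a fraction $1-\lambda$ of the way from $x$ toward the boundary point $y'$ of the convex set stays at least $\lambda$ times as far from the complement as $x$ does), and it holds because $F \setminus D$ is disjoint from $\overline D$'s complement... I will prove it via: any $w \in F\setminus D$ satisfies $\|\,\lambda x + (1-\lambda) y' - w\,\| \geq \lambda \|x - w'\|$ for the appropriate $w'$, using that the ray from $w$ through $\lambda x + (1-\lambda)y'$ hits... — at this point I would carefully set up the elementary planar geometry in the triangle $x, y', w$ and conclude. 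Then combining, $d(z,F\setminus D) \ge d(v, F\setminus D) - \|z-v\| \ge \lambda d(x,F\setminus D) - (1-\lambda) d(y,D)$, as required.
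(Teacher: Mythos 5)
Your outer reductions are fine: taking $y'$ to be the nearest point of $\overline D$ to $y$, the bound $\|z-v\|\le(1-\lambda)\,d(y,\overline D)=(1-\lambda)\,d(y,D)$ and the Lipschitz step $d(z,F\setminus D)\ge d(v,F\setminus D)-\|z-v\|$ are correct. But the remaining ``key claim'' $d(\lambda x+(1-\lambda)y',F\setminus D)\ge\lambda\,d(x,F\setminus D)$ is not a smaller step: since $d(y',D)=0$, it \emph{is} the lemma in the special case $y\in\overline D$, i.e.\ all of the geometric content. Your proposal never proves it; after several abandoned attempts it is deferred to an unspecified ``elementary planar geometry in the triangle $x,y',w$''. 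That is a genuine gap, not a routine verification.

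Moreover, the step cannot be completed from the hypotheses you invoke ($x\in D$, $y'\in\overline D$, $D$ open convex) alone, and the reason is exactly the $F$-versus-$\dR^n$ issue your own scaling attempt runs into. Given $w\in F\setminus D$ with $\|w-v\|<\lambda\,d(x,F\setminus D)$, setting $u:=x+\tfrac{1}{\lambda}(w-v)$ gives $w=\lambda u+(1-\lambda)y'$ and $\|u-x\|<d(x,F\setminus D)$; but this only says $u\notin F\setminus D$, whereas the contradiction ($w\in D$) needs $u\in D$, and $d(x,F\setminus D)$ gives no control over points of $\dR^n\setminus D$ lying outside $F$. When the portion of $\partial D$ nearest to $x$ sticks out of $F$ the claim genuinely fails: take $F=\conv\{(0,0),(1,0),(0,0.1)\}$, $D=\{(a,b):b<0.02\}$, $x=(0.95,0.005)\in D\cap F$, $y'=(0,0.02)\in\overline D$, $\lambda=\tfrac{1}{2}$; then $d(x,F\setminus D)>0.15$ (all of $F\setminus D$ has first coordinate at most $0.8$), yet the midpoint $(0.475,0.0125)$ lies within $0.0075$ of $(0.475,0.02)\in F\setminus D$. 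So no triangle computation in $(x,y',w)$ can rescue the statement in this generality; what the geometry does support is the same inequality with $\dR^n\setminus D$ in place of $F\setminus D$ (then $\|u-x\|<d(x,\dR^n\setminus D)$ forces $u\in D$, hence $w=\lambda u+(1-\lambda)y'\in D$, a contradiction), equivalently the concavity of $d(\cdot,\dR^n\setminus D)$ on $\overline D$. That concavity is precisely the engine of the paper's proof (concavity of $g_1$ on the part of the segment where it is positive, via the ball argument $B(p,c)\subseteq D$ — note the paper is there effectively reading the distance as distance to $\dR^n\setminus D$ — plus a similar-triangles estimate where $g_1$ vanishes), and it is the ingredient missing from your plan; with that substitution, both the lemma and your outline go through, and the version with $\dR^n\setminus D$ is also what is actually needed later, reading $\delta$ as $d(R_1(s),\dR^n\setminus D)>0$.
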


\begin{proof}

\noindent\textbf{Step 1:} Definitions.

Define two continuous functions $g_1,g_2 : [0,1] \to \dR$ by
$$ g_1(\lambda) := d( \lambda x + (1-\lambda) y , F \setminus D ), \ \ \ \forall \lambda \in [0,1], $$
and
$$ g_2(\lambda) := \lambda d(x, F \setminus D) - (1-\lambda) d(y, D).$$
Since the set $D$ is open, $g_1(1) = g_2(1) = d(x,F \setminus D) > 0$.
Define $$\lambda_0 := \inf\{\lambda \in [0,1] \colon g_1(\lambda) > 0\}.$$

\smallskip
\noindent\textbf{Step 2:} The function $g_1$ is concave on $(\lambda_0,1]$.

The claim holds since the set $D$ is convex.
Indeed, suppose that $\lambda,\lambda' \in (\lambda_0,1]$, $g_1(\lambda) = c>0$, and $g_1(\lambda') =c'>0$.
This implies that $d(\lambda x + (1-\lambda)y,F \setminus D) = c$ and $d(\lambda' x + (1-\lambda')y,F \setminus D) = c'$.
Consequently, $B(\lambda x+(1-\lambda)y,c) \subseteq D$ and
$B(\lambda'x+(1-\lambda')y,c') \subseteq D$,
where $B(z,r)$ is the open ball around $z$ with radius $r$, for every $z \in \dR^n$ and every $r \geq 0$.
It follows that $B(\lambda''x+(1-\lambda'')y,c'') \subseteq D$,
where $\lambda'' := \tfrac{1}{2}\lambda + \tfrac{1}{2}\lambda'$   and $c'' := \tfrac{1}{2}c+\tfrac{1}{2}c'$.
Therefore $g_1(\lambda'') = d(\lambda''x+(1-\lambda'')y,F \setminus D) \geq c''$.
The function $g_1$ is therefore mid-point concave and continuous, hence concave.

\smallskip
\noindent\textbf{Step 3:} $g_2(\lambda) \leq 0$ for every $\lambda \in [0,\lambda_0)$.

The claim holds trivially whenever $\lambda_0=0$.
We therefore assume that $\lambda_0 > 0$, so in particular $y$ is not in $D$.
We will show that $g_2(\lambda_0) \leq 0$.
The result for every $\lambda \in [0,\lambda_0)$ will follow since the function $g_2$ is monotone increasing.

Set $q := \lambda_0 x + (1-\lambda_0) y$. Then $q$ lies on the boundary of the set $D$.
Denote by $z$ the unique closest point to $y$ in $D$ (see Figure~\arabic{figurecounter}; the uniqueness follows since $D$ is convex).
Denote by $\theta$ the angle between the line segment $[y,z]$ and the line segment $[y,x]$.
If $\theta > 0$, denote by $w$ the intersection point of the half line $[z,q]$, and the half line that starts at $x$,
lies on the plane defined by $x$, $y$, and $z$,
and has angle $\theta$ relative to the line segment $[x,y]$. The triangles $(x,w,q)$ and $(y,z,q)$ are similar,
hence $(1-\lambda_0)d(y,z) = \lambda_0d(x,w)$.
If $\theta = 0$, define $w := z$, and then $(1-\lambda_0)d(y,z) = \lambda_0d(x,w)$ holds as well.
We conclude that
$$(1-\lambda_0)d(y,D) = (1-\lambda_0)d(y,z) = \lambda_0d(x,w) \geq \lambda_0d(x,F \setminus D),$$
where the last inequality follows from the fact that $w$ is in $F \setminus D$,
because $D$ is convex. Consequently, $g_2(\lambda_0) \leq 0$ as desired.

\smallskip
\noindent\textbf{Step 4:} $g_1(\lambda) \geq g_2(\lambda)$ for every $\lambda \in [0,1]$.

For $\lambda \in [0,\lambda_0)$ we have by Step 3
\[ g_1(\lambda) = 0 \geq g_2(\lambda). \]
By the continuity of $g_1$ and $g_2$ this inequality extends to $\lambda=\lambda_0$.
By Step 2 the function $g_1$ is concave on $(\lambda_0,1]$ and by its definition the function $g_2$ is linear on this interval.
Since $g_1(1) = g_2(1)$ while  $g_1(\lambda_0) \geq g_2(\lambda_0)$,
it follows that $g_1(\lambda) \geq g_2(\lambda)$ for every $\lambda \in [\lambda_0,1]$.

\begin{figure}
\center{
    \includegraphics{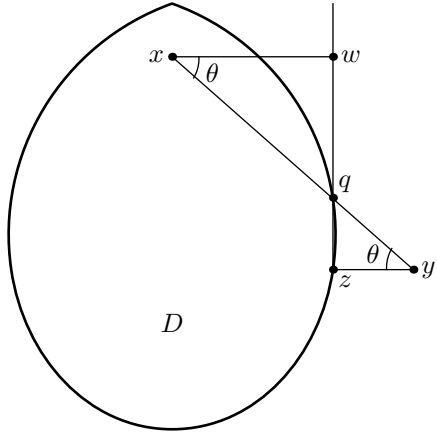}
   \caption{The construction in the proof of Lemma \ref{lemma:distance_bound}.}}
\end{figure}
\end{proof}
\addtocounter{figurecounter}{1}

\begin{proof}[Proof of Lemma~\ref{lemma:uniform:1}]
It is sufficient to prove the claim for histories $h_t \in H$ that satisfy $f(h_t) \geq 2$.
Fix then $\ep > 0$, a strategy $\tau$ of Player~2, and a finite history $h_t \in H$ that satisfies $f(h_t) \geq 2$.
Denote by $m = m(h_t)$ the last stage along $h_t$ such that $h_m \in H^*$.
Note that if $m < t$ then $f(h_t) = f(h_m) + 1$, while if $m=t$ then $f(h_t)=f(h_m)$.
Since $h_m \in H^*$ we have $d(g_m,F \setminus D) \leq \tfrac{3}{m}$.
By Eq.~(\ref{equ:payoff}) and Lemma~\ref{lemma:distance_bound} we deduce that
\begin{eqnarray}
\nonumber
\tfrac{3}{m} &\geq& d(g_m,F \setminus D)\\
\nonumber
 &=&  d\left(\tfrac{f(h_m)}{m} \alpha_m + \tfrac{m-f(h_m)}{m}\beta_m,F \setminus D\right)\\
 &\geq& \tfrac{f(h_m)}{m}d(\alpha_m, F \setminus D) - \tfrac{m-f(h_m)}{m}d(\beta_m, D).
 \label{equ:payoff:2}
\end{eqnarray}
Since $\alpha_m \in R_1(s)$ we have $d(\alpha_m, F \setminus D) \geq \delta$.
Since $\beta_m$ is the average payoff when playing the approachability strategy for $m-f(h_m)$ stages,
there is a constant $c \geq 3$, such that%
\footnote{To properly interpret Eq.~(\ref{equ:54}) and the event $\left\{\beta_m \leq \tfrac{c}{\sqrt{m - f(h_m)}}, \ \ \ \forall t \in \mathbb{N}\right\}$, recall that $m$ depends on $t$.}
\begin{equation}
\label{equ:54}
P_{\sigma^*,\tau}\left[\beta_m \leq \tfrac{c}{\sqrt{m - f(h_m)}}, \ \ \ \forall t \in \mathbb{N}\right] \geq 1-\ep.
\end{equation}
Together with Eq.~(\ref{equ:payoff:2}) this implies that on the event $\left\{\beta_m \leq \tfrac{c}{\sqrt{m - f(h_m)}}, \ \ \ \forall t \in \mathbb{N}\right\}$ we have
\begin{eqnarray}
    \nonumber
    \tfrac{c}{m} \geq \tfrac{3}{m} &\geq& \tfrac{f(h_m)}{m}\delta - \tfrac{m-f(h_m)}{m} \cdot \tfrac{c}{\sqrt{m - f(h_m)}}\\
    &=& \tfrac{f(h_m)}{m}\delta - \tfrac{c\sqrt{m-f(h_m)}}{m}\\
    &\geq& \tfrac{f(h_m)}{m}\delta - \tfrac{c\sqrt{m}}{m},
\end{eqnarray}
which solves to $f(h_m) \leq \tfrac{c(1+\sqrt{m})}{\delta}$.
Consequently, on this event
\[ f(h_t) \leq f(h_m) + 1 \leq \tfrac{c(1+\sqrt{m})}{\delta} + 1 \leq \tfrac{c(2+\sqrt{t})}{\delta}, \]
and the result follows.
\end{proof}

We now complete the proof of Theorem~\ref{thm:main_thm} by showing that the strategy $\sigma^*$ approaches the set $\overline{A} \cap \overline{D}$,
and, in particular, approaches the set $A$.
This result holds since in most stages Player~1 plays a strategy that approaches the set $\overline{A} \cap \overline{D}$.

\begin{lemma}
\label{lemma:uniform:2}
The strategy $\sigma^*$ approaches the set $\overline{A} \cap \overline{D}$.
\end{lemma}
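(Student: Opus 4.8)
The plan is to show that the average payoff $g_t$ under $\sigma^*$ converges to $\overline A \cap \overline D$ by decomposing $g_t$ according to Eq.~(\ref{equ:payoff}) into the contribution $\alpha_t$ of the ``safe'' stages and the contribution $\beta_t$ of the ``$\widehat\sigma$'' stages, and then arguing that (i) the weight $f(h_t)/t$ of the safe stages is negligible, and (ii) along the $\widehat\sigma$ stages the payoff $\beta_t$ is essentially the average payoff produced by the approachability strategy $\widehat\sigma$ against the induced behaviour of Player~2, hence converges to $\overline A \cap \overline D$. Writing $g_t = \tfrac{f(h_t)}{t}\alpha_t + \tfrac{t-f(h_t)}{t}\beta_t$, since both $\alpha_t$ and $\beta_t$ lie in the bounded set $F$ (diameter at most $\sqrt n$), we get $d(g_t,\overline A\cap\overline D) \leq \tfrac{f(h_t)}{t}\sqrt n + d(\beta_t,\overline A\cap\overline D)$, so it suffices to control these two terms.

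First I would fix $\ep>0$ and invoke Lemma~\ref{lemma:uniform:1}: with probability at least $1-\ep/2$, we have $f(h_t)\leq c'\sqrt t/\delta$ for all $t$, hence $\tfrac{f(h_t)}{t}\sqrt n \leq \tfrac{c'\sqrt n}{\delta\sqrt t}\to 0$. On this event, in particular $f(h_t)/t\to 0$, so $t-f(h_t)\to\infty$ and the number of $\widehat\sigma$-stages grows without bound. Next I would observe that $\beta_t$ is exactly the average of the payoffs received during the first $t-f(h_t)$ stages of the play $\varphi(h_t)$, which is a play consistent with Player~1 using $\widehat\sigma$ against some strategy of Player~2 (the strategy that, at each of its decision nodes, mimics what $\tau$ would do given the full history $h_t$; formally one defines an auxiliary strategy $\tau'$ of Player~2 in the ``reduced'' game and notes that the law of $\varphi(h_t)$ under $(\sigma^*,\tau)$, restricted to histories of a given reduced length, is the law under $(\widehat\sigma,\tau')$). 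Since $\widehat\sigma$ approaches $\overline A\cap\overline D$ — and by Blackwell, with the rate in Eq.~(\ref{equ:rate}) — we get a constant $c''$ such that with probability at least $1-\ep/2$, for every reduced length $r$, the reduced average payoff is within $c''/\sqrt r$ of $\overline A\cap\overline D$; applying this with $r = t-f(h_t)$ gives $d(\beta_t,\overline A\cap\overline D)\leq c''/\sqrt{t-f(h_t)}$. On the intersection of the two events (probability at least $1-\ep$), $t-f(h_t)\geq t - c'\sqrt t/\delta \to\infty$, so $d(\beta_t,\overline A\cap\overline D)\to 0$, and combining, $d(g_t,\overline A\cap\overline D)\to 0$ for all large $t$. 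This yields Eq.~(\ref{equ:1}) for $\overline A\cap\overline D$, and hence for $A$ since $\overline A\cap\overline D\subseteq\overline A$; together with Lemma~\ref{prop_inD}, which already gives Eq.~(\ref{equ:2}), the proof of Theorem~\ref{thm:main_thm} is complete.

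The main obstacle I anticipate is the bookkeeping in step (ii): making precise the claim that $\beta_t$ behaves like the average payoff of $\widehat\sigma$ after $r=t-f(h_t)$ stages, uniformly over Player~2's strategies and \emph{simultaneously over all $t$} (the ``$\forall t$'' quantifier inside the probability). The clean way to do this is to show that the stochastic process $r\mapsto(\text{reduced average payoff after }r\text{ reduced stages})$ under $(\sigma^*,\tau)$ has the same distribution as the corresponding process under $(\widehat\sigma,\tau')$ for a suitable $\tau'$ — this is a coupling/reduction argument relying on the fact that $\sigma^*$ consults $\widehat\sigma$ only on the argument $\varphi(h_t)$ and plays the deterministic safe action $s$ otherwise — and then transfer Blackwell's uniform-in-$r$ high-probability bound (Eq.~(\ref{equ:rate})) through this identification. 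One subtlety worth flagging: the number of reduced stages $r(t)=t-f(h_t)$ is itself random, but since it is nondecreasing in $t$ and the Blackwell bound holds for \emph{all} $r$ on a single good event, no extra union bound over $t$ is needed beyond the two already taken.
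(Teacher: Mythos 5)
Your proposal is correct and follows essentially the same route as the paper: the same decomposition $g_t=\tfrac{f(h_t)}{t}\alpha_t+\tfrac{t-f(h_t)}{t}\beta_t$, Lemma~\ref{lemma:uniform:1} to make the safe-stage weight negligible, and the paper's ``outside observer'' argument (which you make explicit as a coupling with an auxiliary strategy $\tau'$) to treat $\beta_t$ as the average payoff of $\widehat\sigma$. The only cosmetic difference is that you invoke the rate bound of Eq.~(\ref{equ:rate}) for $\beta_t$ where the paper uses the plain $\epsilon$--$T_0$ definition of approachability of $\widehat\sigma$; this is immaterial, since the paper itself uses the uniform-in-$t$ rate bound in Lemmas~\ref{lemma:uniform:1} and~\ref{corollary:rate}.
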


\begin{proof}
Fix $\ep > 0$.
Since the strategy $\widehat\sigma$ approaches the set $\overline{A} \cap \overline{D}$,
there is $T_0 \in \mathbb{N}$ such that for every strategy $\tau$ of Player~2,
\begin{equation}
\label{equ:21}
P_{\widehat\sigma,\tau} \Bigl[d(g_t,\overline{A} \cap \overline{D})<\epsilon, \forall t\geq T_0 \Bigl] > 1-\epsilon.
\end{equation}

Consider an outside observer who wakes up at every stage $t$ such that $h_{t-1} \not\in H^*$.
This observer is not aware of the stages $t$ such that $h_{t-1} \in H^*$ and from his point of view Player~1 follows the strategy $\widehat\sigma$.
Let $c'$ be the constant of Lemma~\ref{lemma:uniform:1},
and let $T_1 \in \dN$ be sufficiently large so that $T_1 - \tfrac{c'\sqrt{T_1}}{\delta} \geq T_0$.
By Lemma~\ref{lemma:uniform:1}, with high probability the number of stages which the observer missed up to stage $T_1$ is at most $\tfrac{c'\sqrt{T1}}{\delta}$.
Hence, up to stage $T_1$ of the actual game, with high probability the observer observed at least $T_0$ stages.

Recall that $\beta_t$ is the average payoff up to stage $t$ during the stages $m$ in which the partial history up to stage $m$ is not in $H^*$;
that is,
this is the average payoff as observed by the observer.
Let $\Omega'$ be the event that, for the observer,
$d(\beta_t,\overline{A} \cap \overline{D}) \leq \ep$ for every $t\geq T_0$:
\[ \Omega' := \{ d(\beta_t,\overline{A} \cap \overline{D}) \leq \ep, \forall t \hbox{ such that } h_t \not\in H^*, t-f(h_t) \geq T_0\}. \]
By Eq.~(\ref{equ:21}) we have $P_{\sigma^*,\tau}(\Omega') > 1-\ep$.

Denote by $\Omega''$ the event
\[ \Omega'' := \Omega' \cap \left\{f(h_t) \leq \tfrac{c'\sqrt{t}}{\delta}, \forall t \in \dN\right\}. \]
By Lemma~\ref{lemma:uniform:1} we have $P_{\sigma^*,\tau}[\Omega''] \geq 1-2\ep$.

From now on we restrict our attention to the event $\Omega''$ and we fix $t \geq T_1$.
By definition we have $d(\beta_t, \overline{A} \cap \overline{D}) \leq \ep$.
Since payoffs are in between 0 and $1$, we have $d(\alpha_t,\overline{A} \cap \overline{D}) \leq 1$, which implies that
\begin{equation}
\label{equ:uniform2}
d(g_t,\overline{A} \cap \overline{D}) \leq
\tfrac{f(h_t)}{t} + \tfrac{t - f(h_t)}{t}d(\beta_t,\overline{A} \cap \overline{D}) \leq \ep + \tfrac{c'}{\delta \sqrt{t}}.
\end{equation}
Taking $T_2 := \max{\{T_1, \tfrac{c'^2}{\ep^2 \delta^2}\}}$ we obtain that on $\Omega''$
\[ d(g_t,\overline{A} \cap \overline{D}) \leq 2\ep, \ \ \ \forall t \geq T_2, \]
and the desired result follows.
\end{proof}

We now show that the rate of convergence to the target set $A$ is not harmed by the introduction of constraints.
In particular, remaining in $D$ as part of the approachability strategy does not incur additional penalties on the rate of approachability
from an asymptotic perspective.

\begin{lemma}
\label{corollary:rate}
The rate at which the strategy $\sigma^*$ approaches $\overline{A} \cap \overline{D}$ is $O(\frac{1}{\delta\sqrt{t}})$;
that is, for every $\epsilon>0$ there exists a constant $c'' > 0$ such that for every strategy $\tau$ of Player~2
and every $t \in \dN$ we have
$$ P_{\sigma^{\star},\tau} \left[ d(g_t,\overline{A} \cap \overline{D}) < \frac{c''}{\sqrt{t}} \right] \geq 1- \epsilon.$$
   \end{lemma}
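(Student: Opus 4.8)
The plan is to essentially re-run the proof of Lemma~\ref{lemma:uniform:2}, but replace the crude bound coming from Eq.~(\ref{equ:21}) with the quantitative Blackwell rate in Eq.~(\ref{equ:rate}) applied to the strategy $\widehat\sigma$. First I would fix $\ep > 0$ and invoke Eq.~(\ref{equ:rate}) for $\widehat\sigma$: there is a constant $c \geq 3$ such that, on an event of probability at least $1-\ep$, the average payoff as seen by the ``outside observer'' satisfies $d(\beta_t, \overline{A} \cap \overline{D}) \leq \tfrac{c}{\sqrt{t - f(h_t)}}$ for all $t$ with $h_t \not\in H^*$ (this is exactly the event already named in Eq.~(\ref{equ:54}), since $\beta_m$ there is the observer's average after $m - f(h_m)$ observed stages). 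Simultaneously, by Lemma~\ref{lemma:uniform:1} there is a constant $c'$ such that $f(h_t) \leq \tfrac{c'\sqrt{t}}{\delta}$ for all $t$ with probability at least $1-\ep$. Intersecting these two events gives an event $\Omega$ with $P_{\sigma^*,\tau}[\Omega] \geq 1 - 2\ep$, uniformly over $\tau$.

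Next, on $\Omega$ I would plug these bounds into the decomposition Eq.~(\ref{equ:payoff}), exactly as in Eq.~(\ref{equ:uniform2}):
\[
d(g_t, \overline{A} \cap \overline{D}) \leq \tfrac{f(h_t)}{t} \cdot 1 + \tfrac{t - f(h_t)}{t} \, d(\beta_t, \overline{A} \cap \overline{D}) \leq \tfrac{c'}{\delta \sqrt{t}} + d(\beta_t, \overline{A} \cap \overline{D}),
\]
using that $\alpha_t$ lies in the unit cube so $d(\alpha_t, \overline{A} \cap \overline{D}) \leq 1$, and $f(h_t)/t \leq c'/(\delta\sqrt t)$. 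For the second term, on $\Omega$ we have $d(\beta_t, \overline{A} \cap \overline{D}) \leq \tfrac{c}{\sqrt{t - f(h_t)}}$, and since $f(h_t) \leq \tfrac{c'\sqrt t}{\delta}$ one checks that $t - f(h_t) \geq t/2$ for all $t \geq t_0 := (2c'/\delta)^2$, whence $d(\beta_t, \overline{A} \cap \overline{D}) \leq \tfrac{c\sqrt 2}{\sqrt t}$. Combining, for all $t \geq t_0$ we get $d(g_t, \overline{A} \cap \overline{D}) \leq \tfrac{c'/\delta + c\sqrt 2}{\sqrt t}$. For the finitely many stages $t < t_0$, since all payoffs lie in the unit cube and $\overline{A}\cap\overline D$ is nonempty, $d(g_t, \overline{A}\cap\overline D)$ is bounded by some absolute constant $R$ (e.g.\ $\sqrt n$ plus the distance from the cube to a fixed point of the set), so the bound $R \leq \tfrac{R\sqrt{t_0}}{\sqrt t}$ holds there trivially. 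Setting $c'' := \max\{\, c'/\delta + c\sqrt 2,\ R\sqrt{t_0}\,\}$ then yields $P_{\sigma^*,\tau}[\,d(g_t, \overline{A}\cap\overline D) < \tfrac{c''}{\sqrt t}\,] \geq 1 - 2\ep$ for every $t \in \dN$ and every $\tau$; relabelling $\ep$ (i.e.\ running the argument with $\ep/2$) gives the statement as written.

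The only genuinely delicate point — and the step I would be most careful about — is the probabilistic bookkeeping: the event in Eq.~(\ref{equ:54}) is stated in terms of the observer's clock ($m - f(h_m)$ observed stages) while the conclusion must hold on the real clock ($t$ stages), and $m = m(h_t)$ itself depends on $t$. I would make sure the Blackwell rate is applied to $\widehat\sigma$ along the observer's subsequence of histories (formally, via the coupling/observer argument already used in Lemma~\ref{lemma:uniform:2}, which shows that from the observer's viewpoint Player~1 follows $\widehat\sigma$ exactly), so that Eq.~(\ref{equ:rate}) legitimately yields a single high-probability event valid for all observed stages at once, and then translate ``for all observed stages'' into ``for all real stages $t$'' using $t - f(h_t) \geq t/2$. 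Everything else is the routine triangle-inequality estimate already carried out in Lemma~\ref{lemma:uniform:2}, now kept in quantitative form rather than absorbed into an additive $\ep$.
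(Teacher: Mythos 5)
Your proposal is correct and follows essentially the same route as the paper: apply the Blackwell rate bound (Eq.~(\ref{equ:rate})) to $\widehat\sigma$ via the observer's clock, combine it with the bound $f(h_t)\leq c'\sqrt{t}/\delta$ from Lemma~\ref{lemma:uniform:1}, and plug both into the decomposition of Eq.~(\ref{equ:uniform2}). The only (immaterial) difference is algebraic: the paper keeps the prefactor $\tfrac{t-f(h_t)}{t}$ and bounds $\tfrac{c\sqrt{t-f(h_t)}}{t}\leq \tfrac{c}{\sqrt{t}}$ directly, which avoids your separate treatment of the regimes $t\geq t_0$ and $t<t_0$.
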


\begin{proof}
We use the notations of Lemma~\ref{lemma:uniform:2}.
Fix $\ep > 0$.
Recall that $\beta_t$ is the average payoff up to stage $t$ in those stages in which the strategy $\sigma^*$ followed the strategy $\widehat \sigma$ that
approaches $\overline A \cap \overline D$.
Hence, there is $c > 0$ such that with probability at least $1-\ep$ we have $d(\beta_t,\overline{A} \cap \overline{D}) \leq \tfrac{c}{\sqrt{t - f(h_t)}}$ for every $t \in \dN$
and every strategy $\tau$ of Player~2 (see Eq.~(\ref{equ:rate})).
By Eq.~(\ref{equ:uniform2}) and Lemma~\ref{lemma:uniform:1}, with probability at least $1-\ep$ we have
\begin{eqnarray}
\nonumber
d(g_t,\overline{A} \cap \overline{D}) &\leq& \tfrac{t - f(h_t)}{t} d(\beta_t,\overline{A} \cap \overline{D}) + \tfrac{f(h_t)}{t}\\
&\leq& \tfrac{c \sqrt{t - f(h_t)}}{t} + \tfrac{c' \sqrt{t}}{\delta t} \\
&\leq& \tfrac{c}{\sqrt{t}} + \tfrac{c'}{\delta \sqrt{t}} = \left(c + \frac{c'}{\delta}\right)\frac{1}{\sqrt{t}},
\end{eqnarray}
and the claim follows.
\end{proof}

\section{The Case that $D$ is not Convex}\label{se:not_convex}

The proof of Theorem \ref{thm:main_thm} hinges on the assumption that the set $D$ is convex.
In this section we study approachability with constraints when the set $D$ is not convex.
We start with an example, which shows that in the absence of convexity, Conditions~(C1) and~(C2)
are not sufficient to guarantee that Player~$1$ can approach $A$ while remaining in $D$.
This example will lead us to a weaker concept of approachability with constraints that we will examine.

\begin{example}
\label{example1}
Consider the game that appears in Figure~\arabic{figurecounter},
where Player~1 has four actions, $T_1$, $T_2$, $B_1$, and $B_2$, Player~2 has two actions, $L$, and $R$,
and the payoff is two-dimensional.

\[ \begin{array}{c|c|c|}
 & L & R \\
\hline
T_1 & (1,2) & (2,2) \\
\hline
T_2 &  (2,2) & (1,2)\\
\hline
B_1 &  (1,1) & (2,1)\\
\hline
B_2 & (2,1)  & (1,1)\\
\hline
\end{array}\]

\centerline{Figure \arabic{figurecounter}: The payoff matrix in Example \ref{example1}.}
\addtocounter{figurecounter}{1}

\bigskip

Let $0<\alpha'<\alpha<\frac{1}{2}$ and define $A:= B((\frac{3}{2},1),\alpha')$ and
  $D:= ([1-\alpha,2+\alpha] \times [2-\alpha,2+\alpha] \cup [\frac{3}{2}-\alpha,\frac{3}{2}+\alpha] \times [1-\alpha,2+\alpha])$,
see Figure~\arabic{figurecounter}.

\begin{figure}[h!]
\begin{center}
\begin{tikzpicture}
[scale=1,
    axis/.style={very thick, ->, >=stealth'},
    dashed line/.style={dashed, thin},
    pile/.style={thick, ->, >=stealth', shorten <=2pt, shorten
    >=2pt},
    every node/.style={color=black}
    ]

    \draw[axis] (0.5,0.4)  -- (0.5,5.5) ;
    \draw[axis] (0.4,0.5) -- (5.5,0.5) ;

    \begin{scope}[shift={(6,6)},rotate=180]
   \draw[-] (1.5,1.5)  -- (1.5,2.5);
   \draw[-] (1.5,2.5)  -- (2.5,2.5);
   \draw[-] (2.5,2.5)  -- (2.5,4.5);
   \draw[-] (2.5,4.5)  -- (3.5,4.5);
   \draw[-] (3.5,4.5)  -- (3.5,2.5);
   \draw[-] (3.5,2.5)  -- (4.5,2.5);
   \draw[-] (4.5,2.5)  -- (4.5,1.5);
   \draw[-] (4.5,1.5)  -- (1.5,1.5);
   \draw (3,4) circle[radius=10pt];
   \node[] at (3,4){$A$};

  \end{scope}

   \node[] at (3,3.5){$D$};

\draw[-] (2,0.4)--(2,0.6) ;
\node[] at (2,0.25){1};
\draw[-] (4,0.4)--(4,0.6) ;
\node[] at (4,0.25){2};
\draw[-] (0.4,2)--(0.6,2) ;
\node[] at (0.2,2){1};
\draw[-] (0.4,4)--(0.6,4) ;
\node[] at (0.2,4){2};

\draw (2,2) circle[radius=2pt];
\draw (4,2) circle[radius=2pt];
\draw (2,4) circle[radius=2pt];
\draw (4,4) circle[radius=2pt];
\end{tikzpicture}
\end{center}
\centerline{Figure \arabic{figurecounter}: The sets $A$ and $D$ in Example \ref{example1}.}
\end{figure}

\addtocounter{figurecounter}{1}

\bigskip

Conditions~(C1) and~(C2) are satisfied;
indeed, the actions $T_1$ and $T_2$ are safe actions,
and the strategy $\frac{1}{2}B_1+\frac{1}{2}B_2$ approaches the set $A$, which is a strict subset of $D$,
and therefore it also approaches the set $\overline{A} \cap \overline{D}$.

Player~1 cannot approach $A$ while remaining in $D$.
Indeed, assume to the contrary that Player~1 has a strategy $\sigma$ that approaches $A$ while remaining in $D$,
and let $\tau$ be the stationary strategy of Player~2 that plays $L$ and $R$ with equal probability at every stage.
Fix $\ep > 0$ sufficiently small and suppose that the average payoff at stage $t_0$ is in the set $B(A,\ep)$.
It might happen with positive probability, albeit small, that in the next $\tfrac{t_0}{\ep}$ stages the first coordinate of the outcome is 1.
In this case there will be $t > t_0$ such that the average payoff at stage $t$ is not in $D$.

Nevertheless, as we now argue, Player~1 can approach $A$ while remaining in $D$ with high probability.
To do so, Player~$1$ plays the mixed action $\frac{1}{2}T_1+\frac{1}{2}T_2$ for $K$ stages, where $K \in \dN$ is sufficiently large,
and afterwards he plays the mixed action $\frac{1}{2}B_1+\frac{1}{2}B_2$.
During the first $K$ stages the average payoff is in the convex hull of $(1,2)$ and $(2,2)$, and in particular it remains in $D$.
Moreover, by the strong law of large numbers, provided $K$ is sufficiently large, with high probability the first coordinate of the average payoff at stage $K$ is between
$\tfrac{3}{2}-\tfrac{\alpha}{2}$ and $\tfrac{3}{2}+\tfrac{\alpha}{2}$.
By the strong law of large numbers once again, and provided $K$ is sufficiently large,
with high probability the first coordinate of the average payoff at every stage $t \geq K$ is between $\tfrac{3}{2}-\alpha$ and $\tfrac{3}{2}+\alpha$,
in which case the average payoff remains in $D$.
It follows that Player~1 can indeed approach $A$ while remaining in $D$ with high probability.
\end{example}

Example~\ref{example1} leads us to the study of probabilistic approachability with constraints.

\begin{definition}\label{def:weak_approch_constr}
Let $A$ and $D$ be two subsets of $\mathbb{R}^n$.
Given $\ep > 0$, we say that Player~1 \emph{can approach $A$ while remaining in $D$ with probability at least $1-\ep$}
if there exist a strategy $\sigma_\ep$ and an integer $T_\ep$ such that for every strategy $\tau$ of Player~2 we have:
\begin{eqnarray}
P \Bigl[d(g_t,A)<\epsilon, \forall t\geq T_\ep \Bigl] > 1-\epsilon,\\
P \Bigl[ d(g_t,F \setminus D)>0, \forall t \geq 0 \Bigl]> 1-\epsilon.
\end{eqnarray}
We say that Player~1 \emph{can approach $A$ while remaining in $D$ with high probability}
if for every $\epsilon>0$ Player~1 can approach $A$ while remaining in $D$ with probability at least $1-\ep$.
\end{definition}

To prove Theorem \ref{thm:main_thm}, which studied the case in which the set $D$ is convex,
we constructed a strategy that ``directly" approaches $\overline{A} \cap \overline{D}$:
the strategy attempted to approach the set $\overline{A} \cap \overline{D}$, and played a safe action only to ensure that the average payoff does not leave $D$.
In the next example we illustrate a more complex strategy that handles the nonconvexity of the set $D$
by setting intermediate goals to Player~1.

\begin{example}
\label{example:2}
Consider the game that appears in Figure~\arabic{figurecounter},
where Player~1 has four actions, $x_0$, $x_1$, $x_2$, and $x_3$, Player~2 has two actions, $L$ and $R$,
and the payoff is two-dimensional.

\[ \begin{array}{c|c|c|}
 & L & R \\
 \hline
x_0 & (1,1) & (1,1)\\
 \hline
x_1 & (4,1) & (4,1)\\
 \hline
x_2 & (2,3) & (4,3)\\
\hline
x_3 & (4,3) & (2,3)\\
 \hline
  \end{array}\]

\centerline{Figure \arabic{figurecounter}: The payoff matrix in Example \ref{example:2}.}
\addtocounter{figurecounter}{1}
\bigskip

Let $0<\alpha'<\alpha<\frac{1}{2}$ and define $A:= B((3,3),\alpha')$ and
$D:= ([1-\alpha,3+\alpha] \times [1-\alpha,1+\alpha]\cup [3-\alpha,3+\alpha] \times [1-\alpha,3+\alpha])$, see Figure~\arabic{figurecounter}.

\begin{figure}[!h]
\label{fi:ex2}
\begin{center}
\begin{tikzpicture}
[scale=0.8,
    axis/.style={very thick, ->, >=stealth'},
    ]
\draw[axis] (0.5,0.4)  -- (0.5,6) ;
\draw[axis] (0.4,0.5) -- (8,0.5) ;


\draw (1.5,1.5) circle[radius=2pt];
\draw (4.5,1.5) circle[radius=2pt];
\draw (2.5,3.5) circle[radius=2pt];
\draw (4.5,3.5) circle[radius=2pt];
\draw (3.5,3.5) circle[radius=10pt];
\node[] at (3.5,3.5){$A$};
\draw[-] (3,4)--(3,2)--(1,2)--(1,1)  -- (4,1) -- (4,4) -- (3,4);
\draw[-] (1.5,0.4)--(1.5,0.6) ;
\node[] at (1.5,0){1};
\draw[-] (2.5,0.4)--(2.5,0.6) ;
\draw[-] (3.5,0.4)--(3.5,0.6) ;
\node[] at (3.5,0){3};
\draw[-] (4.5,0.4)--(4.5,0.6) ;
\draw[-] (5.5,0.4)--(5.5,0.6) ;
\node[] at (5.5,0){5};
\draw[-] (6.5,0.4)--(6.5,0.6) ;
\draw[-] (0.4,1.5)--(0.6,1.5) ;
\draw[-] (0.4,2.5)--(0.6,2.5) ;
\draw[-] (0.4,3.5)--(0.6,3.5) ;
\draw[-] (0.4,4.5)--(0.6,4.5) ;
\draw[-] (0.4,5.5)--(0.6,5.5) ;
\node[] at (0,1.5){1};
\node[] at (0,3.5){3};
\node[] at (0,5.5){5};
\end{tikzpicture}
\end{center}
\centerline{Figure \arabic{figurecounter}: The sets $A$ and $D$ in Example \ref{example:2}.}
\addtocounter{figurecounter}{1}
\end{figure}

To approach the set $A$ while remaining in $D$ with high probability, Player~1 can use the following strategy.
\begin{itemize}
\item
Play the action $x_0$ during $T_0$ stages, where $T_0 \in \dN$ is sufficiently large.
Regardless of the play of Player~2, the average payoff is $g_t = (1,1)$ for every $t \in \{1,2,\ldots,T_0\}$.
\item
Between stages $T_0+1$ and $T_0+T_1$, play the action $x_1$, where  $T_1=3T_0$
Regardless of the play of Player~2,
we have $g_{T_0+T_1} = (3,1)$, and
for every $t \in \{T_0+1,T_0+2,\ldots,T_0+T_1\}$
the average payoffs $g_t$ is in the convex hull of $(1,1)$ and $(3,1)$, hence in $D$.
\item
Then play forever the mixed action $[\frac{1}{2}(x_2),\frac{1}{2}(x_3)]$, which approaches the set $A$.
\end{itemize}
By the strong law of large numbers, the probability that the average payoff always remains in $D$ goes to 1 as $T_0$ goes to infinity.
Indeed, the average payoff will leave the set $D$ only if in the last phase of the play,
the percentage of the number of stages in which the outcome is $(2,3)$ is far from 0.5, an event which occurs with a small probability
when $T_0$ is sufficiently large.
\end{example}

While the strategy used in the proof of Theorem \ref{thm:main_thm} plays either the safe action or a strategy that approaches
the set $\overline{A} \cap \overline{D}$, the strategy used in Example \ref{example:2} starts by playing a
sequence of actions that lead the average payoff towards the intermediate target $(3,1)$, and then a sequence of actions that lead
the average payoff towards $\overline{A} \cap \overline{D}$. There are two main differences between the two strategies.

\begin{itemize}
\item First, since in Theorem \ref{thm:main_thm} the set $D$ is convex, the convex hull of $R_1(s)$ and any point $g_t \in D$ is a subset of $D$,
hence Player~1 can switch from playing the safe action $s$ to a strategy that approaches $\overline{A} \cap \overline{D}$, back and forth,
to maintain the average payoff in $D$. When the set $D$ is not convex, one needs to ``lead'' the average payoff from $R_1(s)$ to some point $x$
that satisfies that the convex hull of $x$ and  a subset of $\overline{A} \cap \overline{D}$ remains in $D$, and once the average payoff gets close to $x$,
switch to the strategy that approaches $\overline{A} \cap \overline{D}$. Thus, the play before switching to the strategy that approaches $\overline{A} \cap \overline{D}$ is more involved when the set $D$ is not convex.

\item Second, the convexity of the set $D$ in Theorem \ref{thm:main_thm} ensures that whenever the average payoff gets close to $F \setminus D$ and the strategy plays the safe action again,
the average payoff, which is the average of two vectors in $D$, is in $D$. When the set $D$ is not convex, this property does not hold, and it may be impossible to play the safe action,
in a way that ensures that the average payoff remains in $D$.
Hence in this case we have to work with the weaker concept of Definition~\ref{def:weak_approch_constr}.
\end{itemize}

The discussion above leads us to the following sufficient condition for a pair of set $(A,D)$ to be such that Player~1 can approach $A$ while remaining in $D$ with high probability.
A mixed action of Player~1 is called \emph{safe} if each action in the support of the mixed action is safe.

\begin{theorem}\label{thm:path-approach}
Let $A$ and $D$ be two subsets of $\mathbb{R}^n$, the latter being open. Suppose that
\begin{enumerate}
\item[(D1)] The set $A \cap D$ is approachable.
\item[(D2)] There exist $\delta > 0$, a safe mixed-action $x_0$,
$m$ mixed actions $x_1,\dots,x_m \in \Delta(I)$, and $m$ open subsets $A_1,\dots,A_m$ of $\mathbb{R}^n$ such that
$A_m \subseteq A  \cap D $.
Set $A_0 := R_1(x_0)$,
and assume that for every $0 \leq \ell \leq m-1$ the following hold:
\begin{eqnarray}
\label{equ:c2a}
&&\Conv[\overline{A}_{\ell} \cup  B(R_1(x_{\ell+1}),\delta)] \setminus A_{\ell+1} \hbox{ is not path-connected, and}\\
&&\Conv[A_{\ell} \cup B(A_{\ell+1},\delta)]  \subseteq D.
\label{equ:c2b}
\end{eqnarray}
\end{enumerate}
Then Player~$1$ can approach $A$ while remaining in $D$ with probability $1-\epsilon$, for every $\epsilon>0$.
\end{theorem}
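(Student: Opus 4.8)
The plan is to construct, for each $\epsilon>0$, a strategy $\sigma_\epsilon$ that chains together $m+1$ blocks and a final ``holding'' block, and to analyse it by a martingale concentration argument together with the geometric content of \eqref{equ:c2a}--\eqref{equ:c2b}. Fix $\epsilon>0$ and a large integer $N$ (to be specified at the end, depending on $\epsilon$, $\delta$, $m$ and the geometry of the sets). In Block~$0$ Player~$1$ plays the safe mixed action $x_0$ for the first $N$ stages. For $\ell=1,\dots,m$, Block~$\ell$ starts at the stage at which Block~$\ell-1$ ended (call it $t_{\ell-1}$, with $t_0=N$); throughout Block~$\ell$ Player~$1$ plays the fixed mixed action $x_\ell$, and the block terminates at the first stage $t$ with $g_t\in A_\ell$, at which point we set $t_\ell:=t$. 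If this does not occur within a fixed multiple of $N$ stages we declare a \emph{failure}; we will bound its probability by $\tfrac{\epsilon}{m+2}$. In the holding block, which begins at stage $t_m$ once $g_t$ has entered $A_m\subseteq A\cap D$, Player~$1$ plays a Blackwell strategy $\widehat\sigma$ that approaches a $B$-set $\mathcal B\subseteq\overline{A\cap D}$; such a strategy exists by (D1) together with the Hou--Spinat theorem, and by \eqref{equ:rate} it drives the stand-alone average to within $c/\sqrt t$ of $\mathcal B$ with probability at least $1-\tfrac{\epsilon}{m+2}$.

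In each of the blocks $0,1,\dots,m$ Player~$1$ uses a \emph{fixed} mixed action $x_\ell$, so the conditional expected stage payoff always lies in the convex set $R_1(x_\ell)$; hence the martingale $\sum_k\bigl(U(i_k,j_k)-E[U(i_k,j_k)\mid\mathcal F_{k-1}]\bigr)$ has bounded increments, and the Azuma--Hoeffding inequality shows that, with high probability and simultaneously for all $s$, the average $\bar w_s:=\tfrac1s\sum_{r=1}^s U(i_{t_{\ell-1}+r},j_{t_{\ell-1}+r})$ of the payoffs accumulated in the first $s$ stages of the block lies in $B\!\bigl(R_1(x_\ell),c/\sqrt s\bigr)$. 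Since $g_{t_{\ell-1}+s}=\tfrac{t_{\ell-1}}{t_{\ell-1}+s}\,g_{t_{\ell-1}}+\tfrac{s}{t_{\ell-1}+s}\,\bar w_s$, the average payoff travels, up to an $O(1/\sqrt s)$ error, along the segment from $g_{t_{\ell-1}}$ to a point of $R_1(x_\ell)$ as the weight $\lambda_s:=\tfrac{s}{t_{\ell-1}+s}$ increases from $0$ towards $1$, and by steps of size $O(1/t)$. For Block~$0$ this shows that, with high probability and for all $t$, $g_t$ stays in a prescribed small neighbourhood $B(A_0,\eta)$ of $A_0=R_1(x_0)$; as $A_0$ is compact and $A_0\subseteq\Conv[A_0\cup B(A_1,\delta)]\subseteq D$ by \eqref{equ:c2b}, such a neighbourhood is contained in $D$, and since $x_0$ is safe the handful of initial (suitably rescaled) stages also remain in $D$.

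For a general Block~$\ell$ ($1\le\ell\le m$), at its start $g_{t_{\ell-1}}$ lies in a small neighbourhood of $A_{\ell-1}$ (which, after a negligible enlargement of the sets below, we may treat as $A_{\ell-1}$ itself), and the estimate above confines the trajectory, once $N$ is large, to the convex set $C_\ell:=\Conv[\overline{A}_{\ell-1}\cup B(R_1(x_\ell),\delta)]$ while dragging it towards $R_1(x_\ell)$. The geometric content of \eqref{equ:c2a} (applied with index $\ell-1$) is that $\overline{A}_{\ell-1}$ and $B(R_1(x_\ell),\delta)$ lie in different path-components of $C_\ell\setminus A_\ell$, so that \emph{no} path inside $C_\ell$ running from the former to the latter can avoid the open set $A_\ell$; since the discrete trajectory has vanishing step size and $A_\ell$ is open, a short argument then forces $g_t$ into $A_\ell$, and this happens at a value of $\lambda_s$ bounded away from $1$, hence after at most $O(t_{\ell-1})$ stages, so that $t_\ell=O(N)$ by induction. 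Up to this stopping time the trajectory lies in (a neighbourhood of) the initial portion, preceding $A_\ell$, of a segment emanating from $A_{\ell-1}$, which is contained in $\Conv[A_{\ell-1}\cup\overline{A}_\ell]\subseteq\Conv[A_{\ell-1}\cup B(A_\ell,\delta)]\subseteq D$ by \eqref{equ:c2b}; hence $g_t\in D$ throughout Block~$\ell$.

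In the holding block, starting from $g_{t_m}\in A_m$ at the stage $t_m=O(N)$, write $g_{t_m+s}=\tfrac{t_m}{t_m+s}g_{t_m}+\tfrac{s}{t_m+s}\bar W_s$ where $\bar W_s$ is the stand-alone average under $\widehat\sigma$; by \eqref{equ:rate}, with high probability $\bar W_s\in B(\mathcal B,c/\sqrt s)$ for every $s$, and since $\tfrac{\sqrt s}{t_m+s}\le\tfrac{1}{2\sqrt{t_m}}$ the trajectory stays within $O(1/\sqrt{t_m})=O(1/\sqrt N)$ of $\Conv[\{g_{t_m}\}\cup\mathcal B]$ for \emph{all} $s\ge1$. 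Using \eqref{equ:c2b} with index $m-1$ to confine the relevant region to $D$, we obtain $g_t\in D$ throughout, while $d(g_t,A)\le d(g_t,A\cap D)$, which drops below $\epsilon$ once $s$ is a large enough multiple of $t_m$. Taking $N$ large enough that the various $O(1/\sqrt N)$ errors are $<\epsilon/2$, taking $T_\epsilon$ past the deterministic $O(N/\epsilon)$ bound on the end of this transient, and applying a union bound over the $m+2$ high-probability events, yields the statement. The main obstacle is the crossing argument in Block~$\ell$: converting the mere non-path-connectedness of $C_\ell\setminus A_\ell$ into the assertion that the \emph{noisy, discrete} trajectory---confined to $C_\ell$, pulled towards $R_1(x_\ell)$, and not yet in $A_\ell$---remains trapped in $\Conv[A_{\ell-1}\cup B(A_\ell,\delta)]\subseteq D$ until it is pushed into $A_\ell$; the non-convexity of $D$ is precisely what makes this step, and the analogous control in the holding block, delicate.
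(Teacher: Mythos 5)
Your proposal follows essentially the same route as the paper's (admittedly sketchy) proof: a phased strategy that plays the safe mixed action, then each $x_\ell$ until the average payoff first enters the intermediate target $A_\ell$, then a Blackwell strategy for $A\cap D$, with martingale concentration keeping the trajectory close to the segments toward $R_1(x_\ell)$, condition \eqref{equ:c2a} forcing the crossing into $A_\ell$, condition \eqref{equ:c2b} keeping the average payoff in $D$, and a union bound over the finitely many failure events. The differences are cosmetic (Azuma--Hoeffding in place of the paper's appeal to the strong law of large numbers, and an explicit block through $A_m$), and you leave informal the same delicate crossing step that the paper itself only asserts via its lower bound $\Lambda$ on the length of the passage through $A_\ell$.
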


\begin{proof}
The proof is quite technical, yet it poses no conceptual difficulties.
We therefore only present the main steps of the proof.

Eq.~(\ref{equ:c2a}) implies that every path in $\Conv[A_{\ell} \cup  R_1(x_{\ell+1})]$ that links $A_{\ell}$ to $R_1(x_{\ell+1})$ intersects $A_{\ell+1}$.
Moreover, because $A_{\ell+1}$ is open while $\overline{A}_\ell$ and $R_1(x_{\ell+1})$ are closed,
the length of this intersection is bounded away from 0.
Denote by $\Lambda_\ell > 0$ a lower bound on the length of these intersections and define
$$\Lambda:= \displaystyle \min_{0\leq \ell \leq m-1} \Lambda_{\ell}>0.$$

Fix $\ep > 0$ and let $T$ such that $T \gg \frac{1}{\Lambda}$.
Define a collection $\tau_0,\tau_1,\cdots,\tau_{m-1}$ of stopping times as follows:
\begin{eqnarray}
\tau_0 &:=& T,\\
\tau_{\ell} &:=& \min\{t > \tau_{\ell-1} \colon g_t \in A_\ell\}, \ \ \ \ell \in \{1,2,\cdots,m-1\}.
\end{eqnarray}
Define a strategy $\sigma(T)$ as follows:
\begin{itemize}
\item   Until stage $\tau_0$ play the safe mixed-action $x_0$.
\item   Between stages $\tau_{\ell-1}$ and $\tau_\ell-1$ play the mixed action $x_\ell$, for each $\ell \in \{1,2,\cdots,m-1\}$.
\item   From stage $\tau_{m-1}$ and onward play a strategy $\widehat\sigma$ that approaches the set $A \cap D$.
\end{itemize}
Let $T_0$ be sufficiently large.
We argue that if Player~1 plays the strategy $\sigma(T_0)$,
then with high probability
\begin{itemize}
\item
The stopping times $\tau_1,\cdots,\tau_{m-1}$ are bounded, regardless of the strategy played by Player~2.
\item
The average payoff remains in $D$.
\end{itemize}
Indeed, by the strong law of large numbers, provided $T_0$ is sufficiently large, we have $P_{\sigma(T_0),\tau}[\tau_{0} \leq T_{0}] > 1-\ep$,
for every strategy $\tau$ of Player~2.
Moreover, since $x_0$ is a safe mixed action, $g_t \in D$ for every $t \leq \tau_0$.

Assume by induction that, for a given $\ell \in \{1,2,\cdots,m-2\}$, there is an integer $T_{\ell-1}$ such that $P_{\sigma(T),\tau}[\tau_{\ell-1} \leq T_{\ell-1}] > 1-\ell\ep$.
In particular, $g_{\tau_{\ell-1}} \in A_{\ell-1}$ on the event $\{\tau_{\ell-1} \leq T_{\ell-1}\}$.
At stage $\tau_{\ell-1}$ Player~1 starts playing the mixed action $x_\ell$,
so the expected stage payoff is in $R_1(x_\ell)$.
The sequence of the average payoffs $(g_t)_{t=\tau_{\ell-1}}^{\tau_\ell}$ starts at $A_{\ell-1}$ and moves towards $R_1(x_\ell)$.
Since $\tau_{\ell-1} \geq \tau_0 = T_0$, we have $\|g_t-g_{t-1}\| < \tfrac{2}{T_0} < \Lambda$.
By the strong law of large numbers, provided $T_0$ is much larger than $\tfrac{1}{\Lambda}$,
there is $T_\ell \geq T_{\ell-1}$ such that $g_t \in A_{\ell}$ for some $t \leq T_\ell$, and therefore $\tau_\ell$ is finite.
Moreover, Eq.~\eqref{equ:c2b} implies that provided $T_0$ is sufficiently large, $P_{\sigma(T_0),\tau}[\tau_{\ell} \leq T_{\ell}] > 1-(\ell+1)\ep$,
for every strategy $\tau$ of Player~2.

Since at stage $\tau_{m-1}$ Player~1 starts following a strategy that approaches the set $A \cap D$,
there is an integer $T_m$ such that $P_{\sigma(T_0),\tau}[d(g_t, A \cap D) \leq \ep, \ \ \ \forall t \geq \tau_m] > 1-(m+2)\ep$.
Eq.~\eqref{equ:c2b}, applied to $\ell=m$, implies that, provided $T_0$ is sufficiently large, with high probability $g_t \in D$ for every $t \geq \tau_m$.
The result follows.
\end{proof}

The next example shows that the sufficient condition provided by Theorem~\ref{thm:path-approach} is not necessary.

\begin{example}
\label{example3}
Consider the game that appears in Figure~\arabic{figurecounter},
where Player~1 has three actions, $T_1$, $T_2$, and $B$, Player~2 has two actions, $L$ and $R$,
and the payoff is two-dimensional.

\[ \begin{array}{c|c|c|}
 & L & R \\
\hline
T_1 & (1,2) & (1,2) \\
\hline
T_2 &  (2,1) & (2,1)\\
\hline
B &  (2,3) & (3,2)\\
\hline
 \end{array}\]

\centerline{Figure \arabic{figurecounter}: The payoff matrix in Example \ref{example3}.}
\addtocounter{figurecounter}{1}
\bigskip

Let $0<\alpha'<\alpha<\frac{1}{2}$ and define $A:= \{(2,2)\}$ and
$D:= ([2-\alpha,2+\alpha] \times [2-\alpha,3+\alpha] \cup [2-\alpha,3+\alpha] \times [2-\alpha,2+\alpha])$,
see Figure~\arabic{figurecounter}.

\begin{figure}[h!]
\begin{center}
\begin{tikzpicture}
[scale=0.6,
    axis/.style={very thick, ->, >=stealth'},
    dashed line/.style={dashed, thin},
    pile/.style={thick, ->, >=stealth', shorten <=2pt, shorten
    >=2pt},
    every node/.style={color=black}
    ]

\draw[axis] (0.5,0.4)  -- (0.5,12) ;
\draw[axis] (0.4,0.5) -- (12,0.5) ;

\draw (2,5) circle[radius=2pt];
\draw (8,5) circle[radius=2pt];
\node[] at (5,5){$A$};
\draw (5,5) circle[radius=10pt];

\draw (5,2) circle[radius=2pt];
\draw (5,8) circle[radius=2pt];

\draw[-] (4,4)  -- (4,9);
\draw[-] (4,4)  -- (9,4);

\draw[-] (6,6)  -- (6,9);
\draw[-] (6,6)  -- (9,6);

\draw[-] (4,9)  -- (6,9);
\draw[-] (9,4)  -- (9,6);

\draw[-] (2,0.4)--(2,0.6) ;
\node[] at (2,0){1};

\draw[-] (5,0.4)--(5,0.6) ;
\node[] at (5,0){2};

\draw[-] (8,0.4)--(8,0.6) ;
\node[] at (8,0){3};

\draw[-] (0.4,2)--(0.6,2) ;
\node[] at (0,2){1};

\draw[-] (0.4,5)--(0.6,5) ;
\node[] at (0,5){2};

\draw[-] (0.4,8)--(0.6,8) ;
\node[] at (0,8){3};

\end{tikzpicture}

Figure \arabic{figurecounter}: The sets $A$ and $D$ in Example \ref{example3}.
\end{center}
\end{figure}

\addtocounter{figurecounter}{1}

\bigskip

In this game, $B$ is the only safe action of Player~1, yet $R_1(B) \not\subset D$,
and therefore Eq.~(\ref{equ:c2b}) is not satisfied for $\ell=0$.
Nevertheless, Player~1 can approach $A$ while remaining in $D$.
To do that, Player~1 plays in blocks of size 2.
In the first stage of each block Player~1 plays the action $B$.
If Player~2 played the action $L$ (resp. $R$) at the first stage of the block, then in the second stage of the block Player~1 plays the action $T_2$ (resp. $T_1$).
The average payoff in each block is $(2,2)$.
The reader can verify that the strategy described above ensures that the average payoff converges to $(2,2)$ while remaining in $D$.
\end{example}

\section{Open Problems}
\label{section:open}

We end the paper by mentioning two open problems.
Theorem~\ref{thm:path-approach} provides a sufficient condition for approachability with constraints in the nonconvex case.
A natural question is whether one can find a necessary and sufficient condition for approachability with constraints in this case.
As the examples shows, a complete characterization may require handling many different cases and be quite technical.

Another issue, that already arises in the setup of approachability without constraints,
is what happens when Player~2 (nature) cannot play any mixed action, but is restricted to a subset of mixed actions.
This happens, e.g., when Player~2 is composed of two players who cannot correlate their actions.
Can one find out how the collection of sets that are approachable when Player~2 changes as the set of mixed action available to Player~2 varies?

\bibliographystyle{plainnat}
\bibliography{bib_approach}

\end{document}